 \newtheorem{theorem}{Theorem}
 \newtheorem{lemma}[theorem]{Lemma}
\theoremstyle{definition}
\theoremstyle{remark}
\begin{document}
\title{Holomorphic Hermite functions in Segal-Bargmann spaces}
\author[H.~Chihara]{Hiroyuki Chihara}
\address{College of Education, University of the Ryukyus, Nishihara, Okinawa 903-0213, Japan}
\email{aji@rencho.me}
\thanks{Supported by the JSPS Grant-in-Aid for Scientific Research \#16K05221.}
\subjclass[2000]{Primary 33C45; Secondary 46E20, 46E22, 35S30}
\keywords{Bargmann transform, Segal-Bargmann spaces, holomorphic Hermite functions}
\begin{abstract}
We study systems of holomorphic Hermite functions in the Segal-Bargmann spaces, 
which are Hilbert spaces of entire functions on the complex Euclidean space, 
and are determined by the Bargmann-type integral transform on the real Euclidean space. 
We prove that for any positive parameter 
which is strictly smaller than the minimum eigenvalue of 
the positive Hermitian matrix associated with the transform, 
one can find a generator of holomorphic Hermite functions 
whose anihilation and creation operators satisfy canonical commutation relations. 
In other words, we find the necessary and sufficient conditions 
so that some kinds of entire functions can be such generators. 
Moreover, we also study the complete orthogonality, 
the eigenvalue problems and the Rodrigues formulas. 
\end{abstract}
\maketitle
\section{Introduction}
\label{section:introduction}
We study families of holomorphic Hermite functions of $n$-variables in the framework of (generalized) Segal-Bargmann spaces which are reproducing kernel Hilbert spaces of entire functions and are determined by Bargmann-type integral transforms on the $n$-dimensional real Euclidean space. More precisely, we fix an arbitrary Segal-Bragmann space and investigate conditions so that a family of holomorphic Hermite functions has desirable properties: orthonormality, canonical commutation relations of anihilation and creation operators, completeness, and etc. 
\par
Firstly we introduce Segal-Bargmann spaces associated with Bargmann-type integral transforms and review their basic properties quickly. 
These are originated by Sj\"ostrand. 
He introduced these to develop microlocal analysis. 
See \cite{sjoestrand} or \cite{chihara1}. 
Let $n$ be a positive integer describing the space dimension. 
Let $A$, $B$ and $C$ be $n \times n$ complex matrices satisfying 
\begin{equation}
{}^tA=A, 
\qquad
\det{B}\ne0, 
\qquad
{}^tC=C, 
\quad
C_I
:=
\frac{C-\bar{C}}{2\sqrt{-1}}>0. 
\label{equation:matrices} 
\end{equation}
Set 
$$
\langle{z,\zeta}\rangle
:=
z_1\zeta_1+\dotsb+z_n\zeta_n, 
\quad
\lvert{z}\rvert^2
:=
z_1\overline{z_1}+\dotsb+z_n\overline{z_n}
\quad
\text{for}
\quad
z
=
\begin{bmatrix}
z_1 \\ \vdots \\ z_n 
\end{bmatrix}, 
\zeta
=
\begin{bmatrix}
\zeta_1 \\ \vdots \\ \zeta_n 
\end{bmatrix}
\in
\mathbb{C}^n.
$$
Roughly speaking, the generalized Bargmann transform 
is a global Fourier integral operator on $\mathbb{R}^n$ 
with a complex phase function of the form 
$$
\phi(z,x)
=
\frac{1}{2}
\langle{z,Az}\rangle
+
\langle{z,Bx}\rangle
+
\frac{1}{2}
\langle{x,Cx}\rangle, 
\qquad
z\in\mathbb{C}^n, 
\quad
x\in\mathbb{R}^n.
$$
The Schwartz class on $\mathbb{R}^n$ is denoted by $\mathscr{S}(\mathbb{R}^n)$. 
More precisely, 
the Bargmann-type transform of $u\in\mathscr{S}(\mathbb{R}^n)$ is defined by 
\begin{equation}
Tu(z)
:=
C_\phi
\int_{\mathbb{R}^n}
e^{\sqrt{-1}\phi(z,x)}
u(x)
dx, 
\quad
z\in\mathbb{C}^n,  
\label{equation:bargmann}
\end{equation}
$$
C_\phi
=
2^{-n/2}
\pi^{-3n/4}
\lvert\det{B}\rvert
(\det{C_I})^{-1/4}. 
$$
For any fixed $z\in\mathbb{C}^n$, 
there exists a constant $\varepsilon>0$ such that  
$$
\operatorname{Re}\bigl\{\sqrt{-1}\phi(z,x)\bigr\}
=
\mathcal{O}(1+\lvert{x}\rvert)
-
\frac{1}{2}
\langle{x,C_Ix}\rangle
\leqslant
-\varepsilon\lvert{x}\rvert^2 
$$
for sufficiently large $x\in\mathbb{R}^n$. 
Then the Bargmann transform can be extended for tempered distributions on $\mathbb{R}^n$. 
Using this again, we can get the following function 
$$
\Phi(z)
:=
\max_{x\in\mathbb{R}^n}
\operatorname{Re}\bigl\{\sqrt{-1}\phi(z,x)\bigr\}
=
\langle{z,\Phi^{\prime\prime}_{z\bar{z}}\bar{z}}\rangle
+
\operatorname{Re}\langle{z,\Phi^{\prime\prime}_{zz}z}\rangle, 
$$
$$
\Phi^{\prime\prime}_{z\bar{z}}
=
\frac{BC_I^{-1}B^\ast}{4},
\quad
\Phi^{\prime\prime}_{zz}
=
-
\frac{BC_I^{-1}{}^tB}{4}
-
\frac{A}{2\sqrt{-1}}. 
$$
Note that $\Phi^{\prime\prime}_{z\bar{z}}$ is a positive definite Hermitian matrix, 
and that $\Phi^{\prime\prime}_{zz}$ is a complex symmetric matrix. 
\par
Here we introduce function spaces. 
We denote by $L^2(\mathbb{R}^n)$ 
the set of all square-integrable functions on $\mathbb{R}^n$. 
This is a Hilbert space equipped with the standard inner product and the norm 
\begin{align*}
  (f,g)_{L^2(\mathbb{R}^n)}
& :=
  \int_{\mathbb{R}^n}
  f(x)
  \overline{g(x)}
  dx, 
\\
  \lVert{f}\rVert_{L^2(\mathbb{R}^n)}
& :=
  \sqrt{(f,f)_{L^2(\mathbb{R}^n)}}, 
  \quad
  f,g \in L^2(\mathbb{R}^n).  
\end{align*}
Let $L(dz)$ be the Lebesgue measure for $z\in\mathbb{C}^n\simeq\mathbb{R}^{2n}$. 
We denote by $L^2_\Phi(\mathbb{C}^n)$ the set of all square-integrable functions on $\mathbb{C}^n$ with respect to a weighted measure $e^{-2\Phi(z)}L(dz)$. 
This is also a Hilbert space equipped with an inner product and the norm 
\begin{align*}
  (F,G)_{L^2_\Phi(\mathbb{C}^n)}
& :=
  \int_{\mathbb{C}^n}
  F(z)
  \overline{G(z)}
  e^{-2\Phi(z)}L(dz), 
\\
  \lVert{F}\rVert_{L^2_\Phi(\mathbb{C}^n)}
& :=
  \sqrt{(F,F)_{L^2_\Phi(\mathbb{C}^n)}}, 
  \quad
  F,G \in L^2_\Phi(\mathbb{C}^n). 
\end{align*}
Let $\operatorname{Hol}(\mathbb{C}^n)$ 
be the set of all entire functions on $\mathbb{C}^n$. 
A (generalized) Segal-Bargmann space $\mathscr{H}_\Phi(\mathbb{C}^n)$ 
is defined by 
$\mathscr{H}_\Phi(\mathbb{C}^n):=\operatorname{Hol}(\mathbb{C}^n) \cap L^2_\Phi(\mathbb{C}^n)$. 
It is easy to see that $\mathscr{H}_\Phi(\mathbb{C}^n)$ is a closed subspace of $L^2_\Phi(\mathbb{C}^n)$ and becomes also a Hilbert space. Set 
$$
(F,G)_{\mathscr{H}_\Phi(\mathbb{C}^n)}
:=
(F,G)_{L^2_\Phi(\mathbb{C}^n)}, 
\quad
\lVert{F}\rVert_{\mathscr{H}_\Phi(\mathbb{C}^n)}
:=
\lVert{F}\rVert_{L^2_\Phi(\mathbb{C}^n)},
\quad
F,G \in \mathscr{H}_\Phi(\mathbb{C}^n). 
$$
\par
It is known that the Bargmann-type transform $T$ gives a Hilbert space isomorphism of $L^2(\mathbb{R}^n)$ onto $\mathscr{H}_\Phi(\mathbb{C}^n)$, that is, 
$$
(Tf,Tg)_{\mathscr{H}_\Phi(\mathbb{C}^n)}
=
(f,g)_{L^2(\mathbb{R}^n)}, 
\quad
f,g \in L^2(\mathbb{R}^2). 
$$
The inverse mapping of $T$ is given by 
$$
T^\ast F(x)
:=
C_\phi
\int_{\mathbb{C}^n}
e^{-\sqrt{-1}\ \overline{\phi(z,x)}}
F(z)e^{-2\Phi(z)}L(dz),
\quad
F \in \mathscr{H}_\Phi(\mathbb{C}^n),
\quad
x\in\mathbb{R}^n. 
$$
Note that $T^\ast$ is also defined for $F \in L^2_\Phi(\mathbb{C}^n)$. 
Moreover $T \circ T^\ast$ becomes an orthogonal projection of 
$L^2_\Phi(\mathbb{C}^n)$ onto $\mathscr{H}_\Phi(\mathbb{C}^n)$, 
which is concretely described as 
$$
T \circ T^\ast F(z)
=
C_\Phi
\int_{\mathbb{C}^n}
e^{2\Psi(z,\bar{\zeta})}
F(\zeta)
e^{-2\Phi(\zeta)}
L(d\zeta),
\quad
F \in L^2_\Phi(\mathbb{C}^n),
$$
$$
C_\Phi
=
(2\pi)^{-n}
\lvert{\det{B}}\rvert^2
(\det C_I)^{-1}, 
$$ 
$$
\Psi(z,\bar{\zeta})
=
\langle{z,\Phi^{\prime\prime}_{z\bar{z}}\bar{\zeta}}\rangle
+
\frac{1}{2}
\langle{z,\Phi^{\prime\prime}_{zz}z}\rangle
+
\frac{1}{2}
\langle{\bar{\zeta},\overline{\Phi^{\prime\prime}_{zz}}\bar{\zeta}}\rangle.
$$
Note that $\Phi(z)=\Psi(z,\bar{z})$. 
The significant property is 
$F=T \circ T^\ast F$ for $F \in \mathscr{H}_\Phi(\mathbb{C}^n)$. 
Thus, $\mathscr{H}_\Phi(\mathbb{C}^n)$ is a reproducing kernel Hilbert space 
with a kernel $K_\Phi(z,\zeta)=C_\Phi e^{2\Psi(z,\bar{\zeta})}$. 
See \cite{sjoestrand} for more detail about the above. 
\par
Here we show the most important example of 
Bargmann-type transforms and Segal-Bargmann spaces, those are,  
the standard Bargmann transform and the standard Segal-Bargmann space 
introduced by Bargmann in \cite{bargmann1} and \cite{bargmann2}. 
The standard Bargmann transform is 
$$
T_Bu(z)
=
2^{-n/2}
\pi^{-3n/4}
\int_{\mathbb{C}^n}
e^{-(\langle{z,z}\rangle/4-\langle{z,x}\rangle+\langle{x,x}\rangle/2)}
u(x)
dx,
$$
that is, 
the phase function is 
$$
\phi_B(z,x)
=
\frac{\sqrt{-1}}{4}
\langle{z,z}\rangle
-
\sqrt{-1}
\langle{z,x}\rangle
+
\frac{\sqrt{-1}}{2}
\langle{x,x}\rangle, 
$$
$$
A=\frac{\sqrt{-1}}{2}E, 
\quad
B=-\sqrt{-1}E, 
\quad
C=\sqrt{-1}E,
$$
where $E$ is the $n{\times}n$ identity matrix. 
In this case we have 
$\Phi_B(z)=\lvert{z}\rvert^2/4$ 
and 
$\Psi_B(z,\bar{\zeta})=-\langle{z,\bar{\zeta}}\rangle/4$. 
The standard Segal-Bargmann space $\mathscr{H}_B(\mathbb{C}^n)$ is defined by  
$$
\mathscr{H}_B(\mathbb{C}^n)
=
\operatorname{Hol}(\mathbb{C}^n) \cap L^2_B(\mathbb{C}^n), 
\quad
L^2_B(\mathbb{C}^n)
=
L^2(\mathbb{C}^n, e^{-\lvert{z}\rvert^2/2}L(dz)), 
$$
and its reproducing formula is given by 
$$
F(z)
=
\frac{1}{(2\pi)^n}
\int_{\mathbb{C}^n}
e^{\langle{z,\bar{\zeta}}\rangle/2}
F(\zeta)
e^{-\lvert\zeta\rvert^2/2}
L(d\zeta), 
\quad
F \in \mathscr{H}_B(\mathbb{C}^n). 
$$
See \cite{folland}
\par
Secondly we review holomorphic Hermite functions appearing in mathematics studies. 
In 1990, van Eijndhoven and Meyers first introduced 
one-dimensional holomorphic Hermite functions in \cite{EM}. 
Let $s$ be a parameter satisfying $0<s<1$. 
They introduced a Segal-Bargmann space $\chi_s(\mathbb{C})$ by 
$$
\chi_s(\mathbb{C})
=
\operatorname{Hol}(\mathbb{C})
\cap
L^2
\left(
\mathbb{C}, 
\exp\left(
    -\frac{1-s^2}{2s}\lvert{z}\rvert^2
    +
    \frac{1+s^2}{4s}
    (z^2+\bar{z}^2)
    \right)
L(dz)
\right), 
$$
and a family of holomorphic Hermite functions $\{\psi_k^s(z)\}_{k=0}^\infty$ 
given by a Rodrigues formula 
$$
\psi^s_k(z)
=
\frac{(1-s)^{1/2}}{2^{k/2}\pi^{1/2}s^{1/4}k!^{1/2}}
\left(\frac{1-s}{1+s}\right)^{k/2}
(-1)^k
e^{z^2/2}
\left(\frac{d}{dz}\right)^k
e^{-z^2}, 
\quad
k=0,1,2,\dotsc, 
$$
They studied basic properties of $\{\psi_k^s(z)\}_{k=0}^\infty$ 
including its orthonormality in $\chi_s(\mathbb{C})$. 
After that, the function space $\mathscr{X}_s(\mathbb{C})$ 
and the system of holomorphic Hermite functions $\{\psi^s_k\}_{k=0}^\infty$ 
have been applied to studying 
quantization on $\mathbb{C}$ and related problems 
(see \cite{szafraniec, GF, AGHS}), 
combinatorics and counting (see \cite{IS}) and etc. 
We call $\psi_0^s(z)=\pi\sqrt{s}e^{-z^2/2}/(1-s)$ 
the generator of the family $\{\psi_n^s\}_{n=0}^\infty$. 
Recently, in \cite{chihara3} the author explains this material 
in terms of the Bargmann-type integral transforms and the Segal-Bargmann spaces. 
See also \cite{chihara2}. 
\par
Quite recently, in \cite{GHS} G\'orska, Horzela and Szafraniec introduced 
a two-dimensional version of \cite{EM}. They consider a Segal-Bargmann space 
$$
\chi_s(\mathbb{C}^2)
=
\operatorname{Hol}(\mathbb{C}^2)
\cap 
L^2
\left(
\mathbb{C}^2,
\exp\left(
    -
    \frac{1-s^2}{4s}(\lvert{z}\rvert^2+\lvert{\zeta}\rvert^2)
    +
    \frac{1+s^2}{4s}(z\zeta+\bar{z}\bar{\zeta})
    \right)
    L(dz)L(d\zeta)
\right), 
$$
and introduced an orthonormal basis $\{\psi_{k,l}^s\}_{k,l=0}^\infty$ 
given by a Rodrigues formula 
$$
\psi_{k,l}^s(z,\zeta)
=
\frac{1-s}{\pi\sqrt{sk!l!}}
\left(\frac{1-s}{1+s}\right)^{(k+l)/2}
(-1)^{k+l}
e^{z\zeta/2}
\left(\frac{\partial}{\partial \zeta}\right)^k
\left(\frac{\partial}{\partial z}\right)^l
e^{-z\zeta}. 
$$
We also call $\psi_{0,0}^s$ the generator of $\{\psi_{k,l}^s\}_{k,l=0}^\infty$. 
\par
A question arises: What is the relationship between a Segal-Bargmann spaces and a family of holomorphic Hermite functions? 
Here we state our question precisely. 
Fix an arbitrary Segal-Bargmann space $\mathscr{H}_\Phi(\mathbb{C}^n)$, 
and consider a holomorphic function $\tilde{\psi}_0(z)\in\mathscr{H}_\Phi(\mathbb{C}^n)$ of the form 
$$
\tilde{\psi}_0(z)
=
\exp\bigl(-\langle{z,Qz}\rangle\bigr), 
\quad
z\in\mathbb{C}^n,
$$
where $Q$ is an $n \times n$ complex symmetric matrix. 
Our question is as follows: 
\begin{quote}
What is the condition on $Q$ so that $\tilde{\psi}_0(z)$ can be a generator of holomorphic Hermite functions in $\mathscr{H}_\Phi(\mathbb{C}^n)$?  
\end{quote}
More precisely, a generator is required to have anihilation and creation operators which satisfy canonical commutation relations so that orthogonality and Rodrigues formulas hold.
\par
The purpose of the present paper is to answer our question above. 
Roughly speaking, our main results in the present paper are the necessary and sufficient conditions on $Q$, and the freedom of $Q$ is described by the cardinality of the set of $n \times n$ unitary matrices satisfying some conditions. 
\par
The organization of the present paper is as follows. In Section~2 we investigate the necessary and sufficient conditions on $Q$ so that the desirable canonical commutation relations hold, and introduce a family of holomorphic Hermite functions. In Section~3 we study some properties of the family: orthogonality, an eigenvalue problem for some Hamiltonian, Rodrigues formulas and the completeness in $\mathscr{H}_\Phi(\mathbb{C}^n)$. Finally in Section~4 we see examples given in \cite{EM} and \cite{GHS} from a point of view of our results in the present paper. 
%
%
\section{Definition}
\label{section:definition} 
Throughout of Sections~2 and 3, fix an arbitrary Bargmann transform \eqref{equation:bargmann}. In the present section we seek conditions on $Q$ so that $\tilde{\psi}_0(z)=\exp\bigl(-\langle{z,Qz}\rangle\bigr)$ belongs to $\mathscr{H}_\Phi(\mathbb{C}^n)$ and has anihilation and creation operators satisfying canonical commutation relations. 
We employ heuristic arguments by imposing decidable properties on $\tilde{\psi}_0$. 
\par
Firstly we seek the condition on $Q$ so that $\tilde{\psi}_0 \in \mathscr{H}_\Phi(\mathbb{C}^n)$. It suffices to require $\tilde{\psi}_0 \in L^2_\Phi(\mathbb{C}^n)$ since $\tilde{\psi}_0$ is an entire function on $\mathbb{C}^n$. 
Note that 
$$
\lvert\tilde{\psi}_0(z)\rvert^2e^{-2\Phi(z)}
=
\exp\bigl(
     -2
     \langle{z,\Phi^{\prime\prime}_{z\bar{z}}\bar{z}}\rangle
     -2
     \operatorname{Re}
     \langle{z,(\Phi^{\prime\prime}_{zz}+Q)z}\rangle
     \bigr). 
$$ 
It is natural to impose the following condition on $Q$.
\begin{quote}
{\bf Condition~1}. 
\quad
There exists a positive constant $\delta$ such that 
$$
\langle{z,\Phi^{\prime\prime}_{z\bar{z}}\bar{z}}\rangle
+
\operatorname{Re}
\langle{z,(\Phi^{\prime\prime}_{zz}+Q)z}\rangle
\geqslant
\delta
\lvert{z}\rvert^2,
\quad
z\in\mathbb{C}^n.
$$
\end{quote}
\par
To carry out our computations below, we introduce notation. Recall that $\Phi^{\prime\prime}_{z\bar{z}}$ is a positive definite Hermitian matrix. Then $\Phi^{\prime\prime}_{z\bar{z}}$ is invertible and $(\Phi^{\prime\prime}_{z\bar{z}})^{-1}$ is also a positive definite Hermitian matrix. Set 
$$
\Phi^{\prime\prime}_{z\bar{z}}=[\alpha_{ij}], 
\quad
\overline{\alpha_{ij}}=\alpha_{ji},  
\quad
(\Phi^{\prime\prime}_{z\bar{z}})^{-1}=[\beta_{ij}], 
\quad
\overline{\beta_{ij}}=\beta_{ji}, 
$$ 
$$
\det\bigl(tE-\Phi^{\prime\prime}_{z\bar{z}}\bigr)
=
\prod_{i=1}^n
(t-\lambda_i^2), 
\quad
\lambda_i>0, 
\quad
\lambda_0:=\min\{\lambda_1, \dotsc, \lambda_n\}>0. 
$$
Recall that $\Phi^{\prime\prime}_{zz}$ and $Q$ are complex symmetric matrices. 
Set 
$$
\Phi^{\prime\prime}_{zz}=[\gamma_{ij}], 
\quad
\gamma_{ij}=\gamma_{ji}, 
\quad
Q=[q_{ij}], 
\quad
q_{ij}=q_{ji}.
$$
\par
Secondly we consider anihilation and creation operators for $\tilde{\psi}_0$ in $\mathscr{H}_\Phi(\mathbb{C}^n)$. We shall construct anihilation $\Lambda_1, \dotsc, \Lambda_n$ of the form 
$$
\Lambda
=
\begin{bmatrix}
\Lambda_1
\\
\vdots
\\
\Lambda_n 
\end{bmatrix}
=
\frac{\partial}{\partial z}
+
Rz, 
$$
where $R=[r_{ij}]$ is a complex $n \times n$ matrix. 
In other words, 
$$
\Lambda_i
=
\frac{\partial}{\partial z_i}
+
\sum_{j=1}^nr_{ij}z_j, 
\quad
i=1,\dotsc,n. 
$$
We employ the adjoints $\Lambda_1^\ast, \dotsc, \Lambda_n^\ast$ in $\mathscr{H}_\Phi(\mathbb{C}^n)$ as creation operators for $\tilde{\psi}_0$. Moreover, we require functions generated by the creation operators to be eigenfunctions for a Hamiltonian 
$$
\Lambda_1^\ast\Lambda_1+\dotsb+\Lambda_n^\ast\Lambda_n
$$  
in the same way as the standard Hermite functions on the real Euclidean space 
and the Hamiltonian of the harmonic oscillators. 
For these reasons we impose the following conditions on $\Lambda$. 
\begin{quote}
{\bf Condition~2.}
\quad
\begin{itemize}
\item[(i)] 
$\Lambda_i\tilde{\psi}_0=0$ for $i=1,\dotsc,n$. 
\item[(ii)] 
$\bigl[\Lambda_i,\Lambda_j\bigr]=0$ for $i,j=1,\dotsc,n$, 
which implies 
$\bigl[\Lambda_i^\ast,\Lambda_j^\ast\bigr]=0$ for $i,j=1,\dotsc,n$. 
\item[(iii)] 
There exists a positive constant $\rho$ such that 
$\bigl[\Lambda_i,\Lambda_j^\ast\bigr]=2\rho^2\delta_{ij}$ for $i,j=1,\dotsc,n$, 
where $\delta_{ij}$ is the Kronecker's delta.  
\end{itemize}
\end{quote}
Here we call (ii) and (iii) of Condition~2 canonical commutation relations. 
The plan of our heuristic arguments in the present section are the following. 
Firstly we determine $R$ satisfying (i), 
and check that (ii) is automatically satisfied. 
Secondly we compute $\Lambda^\ast$. 
Thirdly we seek conditions so that (iii) is satisfied. 
Lastly we see that if $Q$ satisfies conditions for (i), (ii) and (iii), 
then Condition~1 is automatically satisfied. 
\par
We consider (i) and (ii) of Condition~2. 
\begin{lemma}
\label{theorem:lemma1}
{\rm (i)} of Condition~2 holds if and only if $R=2Q$, that is, 
$$
\Lambda=\frac{\partial}{\partial z}+2Qz. 
$$
In this case {\rm (ii)} of Condition~2 is automatically satisfied. 
\end{lemma}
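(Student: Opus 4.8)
The plan is to verify (i) by a direct computation of $\Lambda_i\tilde\psi_0$ and then to read off the second claim from the explicit form of $\Lambda$. First I would differentiate the Gaussian: since $Q=[q_{ij}]$ is symmetric, $\langle{z,Qz}\rangle=\sum_{k,l}q_{kl}z_kz_l$ gives $\partial\langle{z,Qz}\rangle/\partial z_i=2(Qz)_i$, hence $\partial\tilde\psi_0/\partial z_i=-2(Qz)_i\tilde\psi_0$ and therefore $\Lambda_i\tilde\psi_0=\bigl((R-2Q)z\bigr)_i\tilde\psi_0$ for $i=1,\dots,n$. Because $\tilde\psi_0(z)=\exp(-\langle{z,Qz}\rangle)$ is entire and nowhere vanishing, requiring $\Lambda_i\tilde\psi_0\equiv0$ for every $i$ is equivalent to requiring the linear form $(R-2Q)z$ to vanish identically on $\mathbb{C}^n$, which holds precisely when $R=2Q$. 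This establishes the stated equivalence together with the formula $\Lambda=\partial/\partial z+2Qz$.

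For the automatic validity of (ii), I would substitute $R=2Q$ and expand $[\Lambda_i,\Lambda_j]$ by bilinearity into the four commutators $[\partial_{z_i},\partial_{z_j}]$, $[\partial_{z_i},2(Qz)_j]$, $[2(Qz)_i,\partial_{z_j}]$ and $[2(Qz)_i,2(Qz)_j]$. The first vanishes because mixed partial derivatives commute, and the last because multiplication operators commute; the two cross terms, regarded as operators on entire functions, evaluate to $2q_{ji}$ and $-2q_{ij}$ respectively, and these cancel because $Q$ is symmetric. Hence $[\Lambda_i,\Lambda_j]=0$ for all $i,j$. The parenthetical assertion that $[\Lambda_i^\ast,\Lambda_j^\ast]=0$ then follows from the adjoint identity $[\Lambda_i,\Lambda_j]^\ast=[\Lambda_j^\ast,\Lambda_i^\ast]=-[\Lambda_i^\ast,\Lambda_j^\ast]$, applied in the heuristic sense adopted in this section.

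I do not expect any genuine obstacle: the whole argument is elementary. The only two places requiring care are the factor $2$ produced by $\partial_{z_i}\langle{z,Qz}\rangle$ — which is exactly where the symmetry hypothesis $q_{ij}=q_{ji}$ is used, both in computing $\Lambda_i\tilde\psi_0$ and in the cancellation of the cross terms — and the remark that $\tilde\psi_0$ is zero-free, which is what lets us pass from $\Lambda_i\tilde\psi_0\equiv0$ to $R=2Q$ rather than to a mere statement about the zero set of $\tilde\psi_0$.
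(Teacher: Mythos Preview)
Your proposal is correct and follows essentially the same approach as the paper: a direct computation of $\Lambda_i\tilde\psi_0$ using the symmetry of $Q$ and the non-vanishing of $\tilde\psi_0$ to get $R=2Q$, followed by an elementary commutator expansion in which the cross terms cancel because $q_{ij}=q_{ji}$. The paper spells out the sums explicitly while you work in matrix notation, and you additionally remark on the adjoint identity for the parenthetical claim about $[\Lambda_i^\ast,\Lambda_j^\ast]$, but the argument is the same.
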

\begin{proof}
We first check that $R=2Q$ is the necessary and sufficient condition for (i). 
For $i=1,\dotsc,n$, a simple computation gives 
\begin{align*}
  \Lambda_i\tilde{\psi}_0(z)
& =
  \left\{
  \frac{\partial}{\partial z_i}
  \bigl(
  -
  \langle
  z,Qz
  \rangle
  \bigr)
  +
  \sum_{j=1}^n
  r_{ij}z_j
  \right\}
  \tilde{\psi}_0(z)
\\
& =
  \left\{
  -
  \frac{\partial}{\partial z_i}
  \sum_{j=1}^n\sum_{k=1}^n
  q_{jk}z_jz_k
  +
  \sum_{j=1}^n
  r_{ij}z_j
  \right\}
  \tilde{\psi}_0(z)
\\
& =
  \left\{
  -
  \sum_{j=1}^n\sum_{k=1}^n
  q_{jk}\delta_{ij}z_k
  -
  \sum_{j=1}^n\sum_{k=1^n}
  q_{jk}z_j\delta_{ik}
  +
  \sum_{j=1}^n
  r_{ij}z_j
  \right\}
  \tilde{\psi}_0(z)
\\
& =
  \left\{
  -
  \sum_{k=1}^n
  q_{ik}z_k
  -
  \sum_{j=1}^n
  q_{ji}z_j
  +
  \sum_{j=1}^n
  r_{ij}z_j
  \right\}
  \tilde{\psi}_0(z)
\\
& =
  \left\{
  -
  \sum_{j=1}^n
  q_{ij}z_j
  -
  \sum_{j=1}^n
  q_{ij}z_j
  +
  \sum_{j=1}^n
  r_{ij}z_j
  \right\}
  \tilde{\psi}_0(z)
\\
& =
  \left\{
  \sum_{j=1}^n
  \bigl(r_{ij}-2q_{ij}\bigr)z_j
  \right\}
  \tilde{\psi}_0(z),
\end{align*}
where we used the symmetry of $Q$, that is, $q_{ji}=q_{ij}$. 
Here we suppose that (i) of Condition~2 holds. 
Since $\tilde{\psi}_0(z)\ne0$, we have 
$$
\sum_{j=1}^n
\bigl(r_{ij}-2q_{ij}\bigr)z_j
=
0, 
\quad
i=1,\dotsc,n 
$$
for all $z={}^t(z_1,\dotsc.z_n)\in\mathbb{C}^n$. 
Then we have $r_{ij}=2q_{ij}$ for $i,j=1,\dotsc,n$, that is, $R=2Q$. 
It is easy to see that if $R=2Q$, then (i) holds. 
\par
Next we see that if $R=2Q$, then (ii) of Condition~2 is satisfied. 
Let $F$ be a holomorphic function on $\mathbb{C}^n$. 
An elementary computation gives
\begin{align*}
  \bigl[\Lambda_i,\Lambda_j\bigr]F(z)
& =
  \left(
  \frac{\partial}{\partial z_i}
  +
  2\sum_{k=1}^nq_{ik}z_k
  \right) 
  \left(
  \frac{\partial}{\partial z_j}
  +
  2\sum_{l=1}^nq_{jl}z_l
  \right) 
  F(z)
\\
& -
  \left(
  \frac{\partial}{\partial z_j}
  +
  2\sum_{l=1}^nq_{jl}z_l
  \right) 
  \left(
  \frac{\partial}{\partial z_i}
  +
  2\sum_{k=1}^nq_{ik}z_k
  \right) 
  F(z)
\\
& =
  2
  \sum_{l=1}^nq_{jl}
  \left(\frac{\partial}{\partial z_i}z_l\right)
  F(z)
  -
  2
  \sum_{k=1}^nq_{ik}
  \left(\frac{\partial}{\partial z_j}z_k\right)
  F(z)
\\
& =
  2
  \sum_{l=1}^nq_{jl}\delta_{il}
  F(z)
  -
  2
  \sum_{k=1}^nq_{ik}\delta_{jk}
  F(z)
\\
& =
  2(q_{ji}-q_{ij})
  F(z)
  =
  2(q_{ij}-q_{ij})
  F(z)
  =
  0, 
\end{align*}
which is desired. 
This completes the proof. 
\end{proof}
We compute the adjoint $\Lambda^\ast$. 
\begin{lemma}
\label{theorem:lemma2} 
We have 
$$
\Lambda^\ast
=
\bigl(\Phi^{\prime\prime}_{zz}+Q\bigr)^\ast
\bigl(\Phi^{\prime\prime}_{z\bar{z}}\bigr)^{-1}
\frac{\partial}{\partial z}
+
2
\Bigl\{
\overline{\Phi^{\prime\prime}_{z\bar{z}}}
-
\bigl(\Phi^{\prime\prime}_{zz}+Q\bigr)
\bigl(\Phi^{\prime\prime}_{z\bar{z}}\bigr)^{-1}
\Phi^{\prime\prime}_{zz}
\Bigr\}
z, 
$$
that is, for $i=1,\dotsc,n$,  
$$
\Lambda_i^\ast
=
\sum_{j=1}^n
\sum_{k=1}^n
\bigl(\overline{\gamma_{ij}}+\overline{q_{ij}}\bigr)
\beta_{jk}
\frac{\partial}{\partial z_k}
+
2\sum_{l=1}^n
\Bigl\{
\overline{\alpha_{il}}
-
\sum_{j=1}^n
\sum_{k=1}^n
\bigl(\overline{\gamma_{ij}}+\overline{q_{ij}}\bigr)
\beta_{jk}
\gamma_{kl}
\Bigr\}
z_l.
$$
\end{lemma}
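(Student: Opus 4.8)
The plan is to compute the Hilbert-space adjoint $\Lambda^\ast$ of $\Lambda=\partial/\partial z+2Qz$ (Lemma~\ref{theorem:lemma1}) directly, by integration by parts against the weight $e^{-2\Phi}$. I would work on a dense subspace of $\mathscr{H}_\Phi(\mathbb{C}^n)$ on which $\lvert F(z)\rvert e^{-\Phi(z)}$ decays rapidly — for instance $T(\mathscr{S}(\mathbb{R}^n))$, which is dense because $T$ is unitary — so that every integrand below is integrable, all boundary terms at infinity vanish, and $z_\ell G$, $\partial_{z_\ell}G$ stay in $L^2_\Phi(\mathbb{C}^n)$. For such entire $F,G$, writing all inner products in $\mathscr{H}_\Phi(\mathbb{C}^n)$,
$$
(\Lambda_iF,G)=(\partial_{z_i}F,G)+2\sum_{k=1}^n q_{ik}(z_kF,G),
$$
so the task reduces to understanding the three families of numbers $(z_jF,G)$, $(F,z_jG)$ and $(F,\partial_{z_j}G)$.

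The key inputs are the two identities obtained by integrating $\partial_{z_i}$ and $\partial_{\bar z_i}$ of $F\overline{G}e^{-2\Phi}$ over $\mathbb{C}^n$, both of which vanish because the integrand decays rapidly while $\partial/\partial z_i$ and $\partial/\partial\bar z_i$ are constant-coefficient first-order operators. Since $F$ and $G$ are entire, $\partial_{\bar z_i}F=0$ and $\partial_{z_i}\overline{G}=0$, so in each identity only the term where the derivative hits $F$ (resp.\ $\overline{G}$) and the term where it hits the weight survive. Using $\partial_{z_i}\Phi=\sum_j\alpha_{ij}\bar z_j+\sum_j\gamma_{ij}z_j$ and $\partial_{\bar z_i}\Phi=\sum_j\overline{\alpha_{ij}}z_j+\sum_j\overline{\gamma_{ij}}\bar z_j$ (which rest on the symmetry of $\Phi^{\prime\prime}_{zz}$ and the Hermiticity of $\Phi^{\prime\prime}_{z\bar z}$), together with $\int_{\mathbb{C}^n} F\overline{G}\,\bar z_j\,e^{-2\Phi}L(dz)=(F,z_jG)$, these become, in vector form with $a=\bigl((z_jF,G)\bigr)_j$, $b=\bigl((F,z_jG)\bigr)_j$, $c=\bigl((F,\partial_{z_j}G)\bigr)_j$,
$$
\bigl((\partial_{z_i}F,G)\bigr)_i=2\Phi^{\prime\prime}_{zz}\,a+2\Phi^{\prime\prime}_{z\bar z}\,b,
\qquad
c=2\overline{\Phi^{\prime\prime}_{z\bar z}}\,a+2\overline{\Phi^{\prime\prime}_{zz}}\,b.
$$

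Assembling the pieces: the first identity and Lemma~\ref{theorem:lemma1} give $\bigl((\Lambda_iF,G)\bigr)_i=2(\Phi^{\prime\prime}_{zz}+Q)a+2\Phi^{\prime\prime}_{z\bar z}\,b$. As $\Phi^{\prime\prime}_{z\bar z}$, hence $\overline{\Phi^{\prime\prime}_{z\bar z}}$, is invertible, I solve the second identity for $a$ and substitute; this eliminates $a$ and writes $\bigl((\Lambda_iF,G)\bigr)_i$ as an explicit linear combination of $c$ and $b$, with coefficient matrices $(\Phi^{\prime\prime}_{zz}+Q)\overline{\Phi^{\prime\prime}_{z\bar z}}^{-1}$ and $2\bigl(\Phi^{\prime\prime}_{z\bar z}-(\Phi^{\prime\prime}_{zz}+Q)\overline{\Phi^{\prime\prime}_{z\bar z}}^{-1}\overline{\Phi^{\prime\prime}_{zz}}\bigr)$. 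Since $c_j=(F,\partial_{z_j}G)$, $b_j=(F,z_jG)$ and the inner product is conjugate-linear in the second argument, the relation $(\Lambda_iF,G)=(F,\Lambda_i^\ast G)$ forces $\Lambda^\ast$ to apply $\partial/\partial z$ and multiplication by $z$ through the complex conjugates of these two matrices; using that $\Phi^{\prime\prime}_{zz}+Q$ is complex symmetric (so its conjugate equals $(\Phi^{\prime\prime}_{zz}+Q)^\ast$) and that $\overline{\overline{\Phi^{\prime\prime}_{z\bar z}}^{-1}}=(\Phi^{\prime\prime}_{z\bar z})^{-1}$, this is exactly the asserted operator $\Lambda^\ast$, and expanding it in the entries $\alpha_{ij},\beta_{ij},\gamma_{ij},q_{ij}$ produces the componentwise formula for $\Lambda_i^\ast$.

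I expect the only genuine difficulty to be analytic bookkeeping rather than the algebra: one must fix the dense core on which both $\Lambda$ and the candidate $\Lambda^\ast$ act, verify that every integration by parts above is legitimate there (no boundary contribution at infinity, $\partial_{z_\ell}G,z_\ell G\in L^2_\Phi(\mathbb{C}^n)$), and then invoke the standard fact that an identity $(\Lambda F,G)=(F,MG)$ valid on such a core identifies the adjoint as $M=\Lambda^\ast$. The Gaussian-type decay of the elements of $T(\mathscr{S}(\mathbb{R}^n))$ — equivalently, Condition~1 for the relevant generators $z^\alpha\tilde{\psi}_0$ — is what supplies the decay that kills the boundary terms.
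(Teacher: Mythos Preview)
Your proposal is correct and follows essentially the same route as the paper: both compute $(\Lambda_iF,G)$ by integration by parts in $z_i$ against the weight $e^{-2\Phi}$, then convert the resulting $z_j$-factors into $\partial_{\bar z}$- and $\bar z$-terms (your second identity is exactly the paper's computation of $\partial_{\bar z_k}e^{-2\Phi}$, inverted via $(\Phi''_{z\bar z})^{-1}$), and finally integrate by parts in $\bar z$ to land on $\partial_{z_j}G$ and $z_jG$. Your matrix packaging of the two identities as a linear system in $a,b,c$ is a tidy reorganization of the paper's inline substitution, and your explicit attention to a dense core on which the boundary terms vanish is a point the paper glosses over.
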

\begin{proof}
First we recall the definition of differentiation. 
For $z_j=x_j+iy_j\in\mathbb{C}$, $x_j,y_j\in\mathbb{R}$, 
$$
\frac{\partial}{\partial z_j}
=
\frac{1}{2}
\left(
\frac{\partial}{\partial x_j}
-
\sqrt{-1}
\frac{\partial}{\partial y_j}
\right),
\quad
\frac{\partial}{\partial \bar{z}_j}
=
\frac{1}{2}
\left(
\frac{\partial}{\partial x_j}
+
\sqrt{-1}
\frac{\partial}{\partial y_j}
\right).
$$
Let $F, G \in \mathscr{H}_\Phi(\mathbb{C}^n)$. 
Note that $\partial \bar{G}/\partial z_i=\overline{\partial G/\partial \bar{z}_i}=0$ 
since $G$ is holomorphic on $\mathbb{C}^n$. 
By using the integration by parts, we deduce that 
\begin{align}
  (\Lambda_iF,G)_{\mathscr{H}_\Phi(\mathbb{C}^n)}
& =
  \int_{\mathbb{C}^n}
  \left\{
  \frac{\partial F}{\partial z_i}(z)
  +
  2
  \sum_{j=1}^n
  q_{ij}z_j
  \right\}
  \overline{G(z)}
  e^{-2\Phi(z)}
  L(dz)
\nonumber
\\
& =
  \int_{\mathbb{C}^n}
  F(z)
  \overline{G(z)}
  \left\{
  \frac{\partial}{\partial z_i}
  \bigl(2\Phi(z)\bigr)
  +
  2
  \sum_{j=1}^n
  q_{ij}z_j
  \right\}
  e^{-2\Phi(z)}
  L(dz).
\label{equation:2101} 
\end{align}
Here we compute $\frac{\partial}{\partial z_i}\bigl(2\Phi(z)\bigr)$. 
An elementary computation yields 
\begin{align*}
  \frac{\partial}{\partial z_i}
  \bigl(2\Phi(z)\bigr)
& =
  \frac{\partial}{\partial z_i}
  \left\{
  2
  \sum_{j=1}^n
  \sum_{k=1}^n
  \alpha_{jk}
  z_j\bar{z}_k
  +
  \sum_{j=1}^n
  \sum_{k=1}^n
  \gamma_{jk}
  z_jz_k
  +
  \overline{
  \sum_{j=1}^n
  \sum_{k=1}^n
  \gamma_{jk}
  z_jz_k
  }
  \right\}
\\
& =
  2
  \sum_{j=1}^n
  \sum_{k=1}^n
  \alpha_{jk}
  \delta_{ij}\bar{z}_k
  +
  \sum_{j=1}^n
  \sum_{k=1}^n
  \gamma_{jk}
  \delta_{ij}z_k
  +
  \sum_{j=1}^n
  \sum_{k=1}^n
  \gamma_{jk}
  z_j\delta_{ik}
\\
& =
  2
  \sum_{k=1}^n
  \alpha_{ik}\bar{z}_k
  +
  \sum_{k=1}^n
  \gamma_{ik}z_k
  +
  \sum_{j=1}^n
  \gamma_{ji}z_j
\\
& =
  2
  \sum_{j=1}^n
  \alpha_{ij}\bar{z}_j
  +
  \sum_{j=1}^n
  \gamma_{ij}z_j
  +
  \sum_{j=1}^n
  \gamma_{ij}z_j
\\
& =
  2
  \sum_{j=1}^n
  \alpha_{ij}\bar{z}_j
  +
  2
  \sum_{j=1}^n
  \gamma_{ij}z_j. 
\end{align*}
Substitute this into \eqref{equation:2101}. 
We have 
\begin{equation}
(\Lambda_iF,G)_{\mathscr{H}_\Phi(\mathbb{C}^n)}
=
\int_{\mathbb{C}^n}
F(z)
\overline{G(z)}
\left\{
2
\sum_{j=1}^n
\alpha_{ij}\bar{z}_j
+
2
\sum_{j=1}^n
(\gamma_{ij}+q_{ij})z_j
\right\}
e^{-2\Phi(z)}
L(dz).
\label{equation:2102} 
\end{equation}
We need to create $z_je^{-2\Phi(z)}$ by using only $\frac{\partial}{\partial \bar{z}}e^{-2\Phi(z)}$ and $\bar{z}e^{-2\Phi(z)}$. 
Recall that $\Phi(z)$ is real-valued.
Here we note that 
\begin{align*}
  \frac{\partial}{\partial \bar{z}_k}
  e^{-2\Phi(z)}
& =
  \overline{\frac{\partial}{\partial z_k}e^{-2\Phi(z)}}
  =
  -
  \overline{\left(\frac{\partial}{\partial z_k}\bigl(2\Phi(z)\bigr)\right)}
  e^{-2\Phi(z)}
\\
& =
  -
  \left\{
  2
  \sum_{l=1}^n
  \overline{\alpha_{kl}}z_l
  +
  2
  \sum_{j=1}^n
  \overline{\gamma_{kl}}\bar{z}_l
  \right\}
  e^{-2\Phi(z)}. 
\end{align*}
Multiplying this by $\overline{\beta_{jk}}$ and summing up on $k$, we deduce that 
\begin{align*}
  \sum_{k=1}^n
  \overline{\beta_{jk}}
  \frac{\partial}{\partial \bar{z}_k}
  e^{-2\Phi(z)}
& =
  -
  2
  \sum_{k=1}^n
  \sum_{l=1}^n
  \overline{\beta_{jk}}
  \overline{\alpha_{kl}}z_l
  e^{-2\Phi(z)}
  -
  2
  \sum_{k=1}^n
  \sum_{l=1}^n
  \overline{\beta_{jk}}
  \overline{\gamma_{kl}}\bar{z}_l
  e^{-2\Phi(z)}
\\
& =
  -
  2
  \sum_{l=1}^n
  \delta_{jl}z_l
  e^{-2\Phi(z)}
  -
  2
  \sum_{k=1}^n
  \sum_{l=1}^n
  \overline{\beta_{jk}}
  \overline{\gamma_{kl}}\bar{z}_l
  e^{-2\Phi(z)}
\\
& =
  -
  2
  z_j
  e^{-2\Phi(z)}
  -
  2
  \sum_{k=1}^n
  \sum_{l=1}^n
  \overline{\beta_{jk}}
  \overline{\gamma_{kl}}\bar{z}_l
  e^{-2\Phi(z)}.
\end{align*}
By using this, we get 
\begin{align*}
  2
  \sum_{j=1}^n
  (\gamma_{ij}+q_{ij})
  z_j
  e^{-2\Phi(z)}
& =
  -
  \sum_{j=1}^n
  \sum_{k=1}^n
  (\gamma_{ij}+q_{ij})
  \overline{\beta_{jk}}
  \frac{\partial}{\partial \bar{z}_k}
  e^{-2\Phi(z)}
\\
& -
  2
  \sum_{j=1}^n
  \sum_{k=1}^n
  \sum_{l=1}^n
  (\gamma_{ij}+q_{ij})
  \overline{\beta_{jk}}
  \overline{\gamma_{kl}}\bar{z}_l
  e^{-2\Phi(z)}.
\end{align*}
Substituting this into \eqref{equation:2102}, we have 
\begin{align*}
  (\Lambda_iF,G)_{\mathscr{H}_\Phi(\mathbb{C}^n)}
& =
  \int_{\mathbb{C}^n}
  F(z)
  \overline{G(z)}
  \biggl\{
  -
  \sum_{j=1}^n
  \sum_{k=1}^n
  (\gamma_{ij}+q_{ij})
  \overline{\beta_{jk}}
  \frac{\partial}{\partial \bar{z}_k}
  e^{-2\Phi(z)}
\\
& \qquad \qquad
  +
  2
  \sum_{l=1}^n
  \left(
  \alpha_{il}
  -
  \sum_{j=1}^n
  \sum_{k=1}^n
  (\gamma_{ij}+q_{ij})
  \overline{\beta_{jk}}
  \overline{\gamma_{kl}}
  \right)
  \bar{z}_l
  e^{-2\Phi(z)}
  \biggr\}
  L(dz). 
\end{align*}
By using the integration by parts and $\partial F/\partial \bar{z}_k=0$, 
we prove Lemma~\ref{theorem:lemma2}.
\end{proof}
In order to consider (iii) of Condition~2, we compute a commutator $[\Lambda_i,\Lambda_j^\ast]$. We denote by $[\Lambda,\Lambda^\ast]$ an $n \times n$ matrix whose $(i,j)$-element is $[\Lambda_i,\Lambda_j^\ast]$. 
\begin{lemma}
\label{theorem:lemma2103} 
For $i,j=1,\dotsc,n$, 
$$
[\Lambda_i,\Lambda_j^\ast]
=
2
\left\{
\alpha_{ij}
-
\sum_{k=1}^n
\sum_{l=1}^n
(\gamma_{ik}+q_{ik})
\overline{\beta_{kl}}
(\overline{\gamma_{jl}}+\overline{q_{jl}})
\right\}, 
$$
that is, 
$$
[\Lambda,\Lambda^\ast]
=
2
\bigl\{
\Phi^{\prime\prime}_{z\bar{z}}
-
(\Phi^{\prime\prime}_{zz}+Q)
\overline{(\Phi^{\prime\prime}_{z\bar{z}})^{-1}}
(\Phi^{\prime\prime}_{zz}+Q)^\ast
\bigr\}.
$$
\end{lemma}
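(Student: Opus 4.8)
The plan is to compute the commutator $[\Lambda_i,\Lambda_j^\ast]$ directly from the explicit formulas already obtained. By Lemma~\ref{theorem:lemma1}, $\Lambda_i=\partial/\partial z_i+2\sum_{k}q_{ik}z_k$, and by Lemma~\ref{theorem:lemma2} the adjoint $\Lambda_j^\ast$ is again a first-order differential operator with \emph{linear} coefficients, say
$$
\Lambda_j^\ast
=
\sum_{l=1}^n a_{jl}\frac{\partial}{\partial z_l}
+
\sum_{l=1}^n b_{jl}z_l,
$$
with $a_{jl}=\sum_{k}(\overline{\gamma_{jk}}+\overline{q_{jk}})\beta_{kl}$ and $b_{jl}=2\overline{\alpha_{jl}}-2\sum_{k,m}(\overline{\gamma_{jk}}+\overline{q_{jk}})\beta_{km}\gamma_{ml}$. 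Thus both $\Lambda_i$ and $\Lambda_j^\ast$ have the form $\sum_p u_p\,\partial/\partial z_p+\sum_p v_p z_p$ as operators on $\operatorname{Hol}(\mathbb{C}^n)$, and the first step is to record the elementary fact that the commutator of two such operators, with symbols $(u,v)$ and $(s,t)$, equals multiplication by the constant $\sum_p(u_p t_p-v_p s_p)$: the pure second-order parts commute, the pure multiplication parts commute, the mixed $z\,\partial$ parts cancel, and only the Heisenberg cross terms $[\partial/\partial z_p,z_q]=\delta_{pq}$ survive.

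Specializing with $u_p=\delta_{ip}$, $v_p=2q_{ip}$ for $\Lambda_i$ and $s_p=a_{jp}$, $t_p=b_{jp}$ for $\Lambda_j^\ast$, this identity gives
$$
[\Lambda_i,\Lambda_j^\ast]
=
b_{ji}-2\sum_{p=1}^n q_{ip}a_{jp}.
$$
Next I would substitute and combine the two sums: $b_{ji}$ contributes $2\overline{\alpha_{ji}}-2\sum_{k,m}(\overline{\gamma_{jk}}+\overline{q_{jk}})\beta_{km}\gamma_{mi}$, while $-2\sum_p q_{ip}a_{jp}=-2\sum_{k,m}(\overline{\gamma_{jk}}+\overline{q_{jk}})\beta_{km}q_{im}$, so the two merge into $-2\sum_{k,m}(\overline{\gamma_{jk}}+\overline{q_{jk}})\beta_{km}(\gamma_{mi}+q_{im})$. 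Now I would invoke the symmetry relations from Section~\ref{section:definition}: $\overline{\alpha_{ji}}=\alpha_{ij}$ (Hermiticity of $\Phi^{\prime\prime}_{z\bar z}$), $\gamma_{mi}=\gamma_{im}$ and $q_{im}=q_{mi}$ (symmetry of $\Phi^{\prime\prime}_{zz}$ and of $Q$), and $\overline{\beta_{kl}}=\beta_{lk}$ (Hermiticity of $(\Phi^{\prime\prime}_{z\bar z})^{-1}$); after a relabeling of the summation indices this is exactly
$$
[\Lambda_i,\Lambda_j^\ast]
=
2\left\{\alpha_{ij}-\sum_{k,l}(\gamma_{ik}+q_{ik})\overline{\beta_{kl}}(\overline{\gamma_{jl}}+\overline{q_{jl}})\right\}.
$$
Finally, reading the double sum as a matrix product shows that $\sum_{k,l}(\gamma_{ik}+q_{ik})\overline{\beta_{kl}}(\overline{\gamma_{jl}}+\overline{q_{jl}})$ is the $(i,j)$-entry of $(\Phi^{\prime\prime}_{zz}+Q)\,\overline{(\Phi^{\prime\prime}_{z\bar z})^{-1}}\,(\Phi^{\prime\prime}_{zz}+Q)^\ast$, which yields the asserted matrix identity for $[\Lambda,\Lambda^\ast]$.

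The argument has essentially no analytic content: $\Lambda_i$ and the concrete form of $\Lambda_j^\ast$ from Lemma~\ref{theorem:lemma2} are genuine differential operators on $\operatorname{Hol}(\mathbb{C}^n)$, so the commutator may be evaluated on an arbitrary entire function without any regularity or convergence issue. The only point requiring attention — and the only place an error could creep in — is the index bookkeeping: carrying the conjugates correctly, using $\overline{\beta_{kl}}=\beta_{lk}$ to reconcile the transposed $\beta$ in the target with the untransposed $\beta$ produced by Lemma~\ref{theorem:lemma2}, and relabeling the merged double sum consistently.
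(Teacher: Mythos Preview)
Your argument is correct and is essentially the same direct commutator computation the paper carries out; the only difference is organizational, in that you first isolate the general Heisenberg identity $[\,\sum_p u_p\partial_{z_p}+\sum_p v_p z_p,\ \sum_p s_p\partial_{z_p}+\sum_p t_p z_p\,]=\sum_p(u_p t_p-v_p s_p)$ and then specialize, whereas the paper expands $\Lambda_i\Lambda_j^\ast F-\Lambda_j^\ast\Lambda_i F$ in full and collects the surviving Kronecker-delta terms. The index manipulations and the use of the symmetries $\overline{\alpha_{ji}}=\alpha_{ij}$, $\gamma_{mi}=\gamma_{im}$, $q_{im}=q_{mi}$, $\beta_{lk}=\overline{\beta_{kl}}$ match the paper's exactly.
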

\begin{proof}
Let $F(z)$ be a holomorphic function on $\mathbb{C}^n$. 
We deduce that 
\begin{align*}
  [\Lambda_i,\Lambda_j^\ast]F
& =
  \left(
  \frac{\partial}{\partial z_i}
  +
  2
  \sum_{\mu=1}^n
  q_{i\mu}z_\mu
  \right)
\\
& \times
  \left\{
  \sum_{k=1}^n
  \sum_{l=1}^n
  (\overline{\gamma_{jl}}+\overline{q_{jl}})
  \beta_{lk}
  \frac{\partial F}{\partial z_k}
  +
  2
  \sum_{l=1}^n
  \overline{\alpha_{jl}}
  z_l
  F
  -
  2
  \sum_{k=1}^n
  \sum_{l=1}^n
  \sum_{m=1}^n
  (\overline{\gamma_{jl}}+\overline{q_{jl}})
  \beta_{lk}
  \gamma_{km}
  z_m
  F
  \right\} 
\\
& -
  \left\{
  \sum_{k=1}^n
  \sum_{l=1}^n
  (\overline{\gamma_{jl}}+\overline{q_{jl}})
  \beta_{lk}
  \frac{\partial}{\partial z_k}
  +
  2
  \sum_{l=1}^n
  \overline{\alpha_{jl}}
  z_l
  -
  2
  \sum_{k=1}^n
  \sum_{l=1}^n
  \sum_{m=1}^n
  (\overline{\gamma_{jl}}+\overline{q_{jl}})
  \beta_{lk}
  \gamma_{km}
  z_m
  \right\} 
\\
& \times
  \left(
  \frac{\partial F}{\partial z_i}
  +
  2
  \sum_{\mu=1}^n
  q_{i\mu}
  z_\mu
  F
  \right)
\\
& =
  \biggl\{
  2
  \sum_{l=1}^n
  \overline{\alpha_{jl}}
  \delta_{il}
  -
  2
  \sum_{k=1}^n
  \sum_{l=1}^n
  \sum_{m=1}^n
  (\overline{\gamma_{jl}}+\overline{q_{jl}})
  \beta_{lk}
  \gamma_{km}
  \delta_{im}
\\
& \qquad\qquad
  -
  2
  \sum_{k=1}^n
  \sum_{l=1}^n
  \sum_{\mu=1}^n
  (\overline{\gamma_{jl}}+\overline{q_{jl}})
  \beta_{lk}
  q_{i\mu}
  \delta_{k\mu}
  \biggr\}
  F
\\
& =
  \left\{
  2
  \overline{\alpha_{ji}}
  -
  2
  \sum_{k=1}^n
  \sum_{l=1}^n
  (\overline{\gamma_{jl}}+\overline{q_{jl}})
  \beta_{lk}
  \gamma_{ki}
  -
  2
  \sum_{k=1}^n
  \sum_{l=1}^n
  (\overline{\gamma_{jl}}+\overline{q_{jl}})
  \beta_{lk}
  q_{ik}
  \right\}
  F
\\
& =
  2
  \left\{
  \alpha_{ij}
  -
  \sum_{k=1}^n
  \sum_{l=1}^n
  (\gamma_{ik}+q_{ik})
  \overline{\beta_{kl}}
  (\overline{\gamma_{jl}}+\overline{q_{jl}})
  \right\}
  F. 
\end{align*}
This completes the proof.  
\end{proof}
We study (iii) of Condition~2. Recall that $\Phi^{\prime\prime}_{z\bar{z}}$ is a positive definite Hermitian matrix. There exists an $n \times n$ unitary matrix $U$ such that 
$$
\operatorname{diag}(\lambda_1^2, \dotsc, \lambda_n^2)
:=
\begin{bmatrix}
\lambda_1^2 & 0 & \dotsb & 0
\\
0 & \ddots & \ddots & \vdots
\\
\vdots & \ddots & \ddots & 0
\\
0 & \dotsb & 0 & \lambda_n^2 
\end{bmatrix}
=
U^\ast\Phi^{\prime\prime}_{z\bar{z}}U.
$$
In this case 
$$
\Phi^{\prime\prime}_{z\bar{z}}
=
U\operatorname{diag}(\lambda_1^2, \dotsc, \lambda_n^2)U^\ast, 
\quad
(\Phi^{\prime\prime}_{z\bar{z}})^{-1}
=
U\operatorname{diag}\left(\frac{1}{\lambda_1^2}, \dotsc, \frac{1}{\lambda_n^2}\right)U^\ast.
$$
Note that (iii) of Condition~2 is equivalent to the condition that there exists a positive constant $\rho$ such that 
\begin{equation}
\Phi^{\prime\prime}_{z\bar{z}}-\rho^2E
=
(\Phi^{\prime\prime}_{zz}+Q)
\overline{(\Phi^{\prime\prime}_{z\bar{z}})^{-1}}
(\Phi^{\prime\prime}_{zz}+Q)^\ast.
\label{equation:2201}
\end{equation}
Note that the both hand sides of \eqref{equation:2201} are Hermitian matrices, and their eigenvalues are all real-valued. 
If we set $\mu_i^2:=\lambda_i^2-\rho^2 \in \mathbb{R}$ for $i=1, \dotsc ,n$, then 
$$
\Phi^{\prime\prime}_{z\bar{z}}-\rho^2E
=
U
\operatorname{diag}(\mu_1^2, \dotsc, \mu_n^2)
U^\ast. 
$$
Note that $\Phi^{\prime\prime}_{zz}+Q$ is a complex symmetric matrix. 
The right hand side of \eqref{equation:2201} is nonnegative, that is, all the eigenvalues are nonnegative. Indeed, for any $\zeta\in\mathbb{C}^n$, we deduce that 
\begin{align*}
& {}^t\zeta
  (\Phi^{\prime\prime}_{zz}+Q)
  \overline{(\Phi^{\prime\prime}_{z\bar{z}})^{-1}}
  (\Phi^{\prime\prime}_{zz}+Q)^\ast
  \bar{\zeta} 
\\
  =
& {}^t\zeta
  (\Phi^{\prime\prime}_{zz}+Q)
  \overline{U}
  \operatorname{diag}\left(\frac{1}{\lambda_1^2}, \dotsc, \frac{1}{\lambda_n^2}\right)
  {}^tU
  (\Phi^{\prime\prime}_{zz}+Q)^\ast
  \bar{\zeta}
\\
  =
& {}^t\zeta
  (\Phi^{\prime\prime}_{zz}+Q)
  \overline{U}
  \operatorname{diag}\left(\frac{1}{\lambda_1}, \dotsc, \frac{1}{\lambda_n}\right)
  \operatorname{diag}\left(\frac{1}{\lambda_1}, \dotsc, \frac{1}{\lambda_n}\right)
  {}^tU
  (\Phi^{\prime\prime}_{zz}+Q)^\ast
  \bar{\zeta}
\\
  =
& {}^t 
  \left\{
  \operatorname{diag}\left(\frac{1}{\lambda_1}, \dotsc, \frac{1}{\lambda_n}\right)
  U^\ast
  (\Phi^{\prime\prime}_{zz}+Q)
  \zeta
  \right\}
  \overline{
  \left\{
  \operatorname{diag}\left(\frac{1}{\lambda_1}, \dotsc, \frac{1}{\lambda_n}\right)
  U^\ast
  (\Phi^{\prime\prime}_{zz}+Q)
  \zeta
  \right\}
  }
\\
  =
& \left\lvert
  \operatorname{diag}\left(\frac{1}{\lambda_1}, \dotsc, \frac{1}{\lambda_n}\right)
  U^\ast
  (\Phi^{\prime\prime}_{zz}+Q)
  \zeta
  \right\rvert^2
  \geqslant
  0.
\end{align*}
Hence all the $\mu_i^2, \dotsc, \mu_n^2$ must be nonnegative, 
and we may assume that all the $\mu_i, \dotsc, \mu_n$ are nonnegative. 
In other words, $\rho$ must satisfy $0<\rho\leqslant\lambda_0$.  
Thus the condition \eqref{equation:2201} becomes the following condition that 
there exists $\rho\in(0,\lambda_0]$ such that 
\begin{equation}
U
\operatorname{diag}(\mu_1^2, \dotsc, \mu_n^2)
U^\ast
=
(\Phi^{\prime\prime}_{zz}+Q)
\overline{(\Phi^{\prime\prime}_{z\bar{z}})^{-1}}
(\Phi^{\prime\prime}_{zz}+Q)^\ast.
\label{equation:2202}
\end{equation}
We see \eqref{equation:2202} as an equation for $Q$ for fixed $\rho\in(0,\lambda_0]$. 
We can characterize solutions $Q$ to \eqref{equation:2202} for $\rho \in (0,\lambda_0)$. 
In case of $\rho=\lambda_0$, the situation becomes complicated. 
As a result, the case of $\rho=\lambda_0$ will be useless in the next section, and we do not study this case. Note that if $0<\rho<\lambda_0$, then all the $\mu_i$ ($i=1,\dotsc,n$) are strictly positive. 
\begin{lemma}
\label{theorem:lemma4}
Fix an arbitrary $\rho\in(0,\lambda_0)$. 
The following conditions {\rm (I)} and {\rm (II)} are mutually equivalent.
\begin{itemize}
\item[{\rm (I)}] 
A complex symmetric matrix $Q$ solves \eqref{equation:2202}. 
\item[{\rm (II)}] 
There exists a unitary matrix $X$ such that 
\begin{equation}
Q
=
-
\Phi^{\prime\prime}_{zz}
+
U
\operatorname{diag}(\mu_1,\dotsc,\mu_n)
X
\operatorname{diag}(\lambda_1,\dotsc,\lambda_n)
{}^tU,
\label{equation:matrixQ} 
\end{equation}
\begin{equation}
{}^tX
\operatorname{diag}\left(\frac{\mu_1}{\lambda_1}, \dotsc, \frac{\mu_n}{\lambda_n}\right) 
=
\operatorname{diag}\left(\frac{\mu_1}{\lambda_1}, \dotsc, \frac{\mu_n}{\lambda_n}\right)
X.
\label{equation:matrixX} 
\end{equation}
\end{itemize}
\end{lemma}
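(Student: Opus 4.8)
The plan is to diagonalize $\Phi^{\prime\prime}_{z\bar{z}}$ by the unitary $U$ introduced just before the statement, transport \eqref{equation:2202} into those coordinates, and reduce the whole problem to the two diagonal identities governing $X$; the single non-clerical point will be that the symmetry constraint \eqref{equation:matrixX} is exactly what forces $X$ to be unitary.

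First I would fix notation: $D_\lambda:=\operatorname{diag}(\lambda_1,\dotsc,\lambda_n)$, $D_\mu:=\operatorname{diag}(\mu_1,\dotsc,\mu_n)$ (all entries strictly positive, since $0<\rho<\lambda_0$), and $E_\nu:=\operatorname{diag}(\mu_1/\lambda_1,\dotsc,\mu_n/\lambda_n)=D_\mu D_\lambda^{-1}=D_\lambda^{-1}D_\mu$, a positive real diagonal matrix. Put $M:=\Phi^{\prime\prime}_{zz}+Q$ and $N:=U^\ast M\bar{U}$, so that $M=UN\,{}^tU$, and $M$ (hence $Q$) is complex symmetric if and only if $N$ is. Using $U^\ast U=UU^\ast=E$ in the forms ${}^tU\bar{U}=E$ and $\overline{U^\ast}={}^tU$, together with $\overline{(\Phi^{\prime\prime}_{z\bar{z}})^{-1}}=\bar{U}D_\lambda^{-2}\,{}^tU$ and $M^\ast=\bar{U}\,N^\ast U^\ast$, a direct multiplication collapses the right-hand side of \eqref{equation:2202} to $U\,ND_\lambda^{-2}N^\ast\,U^\ast$, while the left-hand side is $UD_\mu^2U^\ast$. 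Hence, for symmetric $N$ (where $N^\ast=\bar{N}$), equation \eqref{equation:2202} is equivalent to $D_\mu^2=ND_\lambda^{-2}\bar{N}$.

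Next I would change variables by $X:=D_\mu^{-1}ND_\lambda^{-1}$, equivalently $N=D_\mu XD_\lambda$; since $D_\mu,D_\lambda$ are invertible this is a bijection $N\leftrightarrow X$, and it is exactly the content of \eqref{equation:matrixQ} via $Q=-\Phi^{\prime\prime}_{zz}+UN\,{}^tU$. Transposing $N=D_\mu XD_\lambda$ shows that $N$ is symmetric if and only if $E_\nu X={}^tX E_\nu$, i.e. \eqref{equation:matrixX}. Substituting $N=D_\mu XD_\lambda$ into $D_\mu^2=ND_\lambda^{-2}\bar{N}$, then cancelling $D_\mu$ on the left and $D_\lambda$ on the right and using $D_\lambda^{-1}D_\mu=E_\nu$, turns that equation into $XE_\nu\bar{X}=E_\nu$. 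So at this stage (I) reads ``$X$ satisfies \eqref{equation:matrixX} and $XE_\nu\bar{X}=E_\nu$'', and (II) reads ``$X$ is unitary and $X$ satisfies \eqref{equation:matrixX}''.

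It remains to bridge these, which is the only step with real content. Taking complex conjugates in \eqref{equation:matrixX} gives $X^\ast E_\nu=E_\nu\bar{X}$, hence $\bar{X}=E_\nu^{-1}X^\ast E_\nu$; plugging this into $XE_\nu\bar{X}=E_\nu$ yields $XX^\ast E_\nu=E_\nu$, so $XX^\ast=E$ and $X$ is unitary, giving (I)$\Rightarrow$(II). Conversely, if $X$ is unitary and \eqref{equation:matrixX} holds, then from $E_\nu\bar{X}=X^\ast E_\nu$ (again the conjugate of \eqref{equation:matrixX}) we get $XE_\nu\bar{X}=XX^\ast E_\nu=E_\nu$, recovering the remaining half of (I), so (II)$\Rightarrow$(I). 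Unwinding $X=D_\mu^{-1}U^\ast(\Phi^{\prime\prime}_{zz}+Q)\bar{U}D_\lambda^{-1}$ in both directions, and checking that the $Q$ built from a given $X$ via \eqref{equation:matrixQ} is complex symmetric (which is \eqref{equation:matrixX} once more), finishes the proof. The \textbf{main obstacle} is purely notational bookkeeping — keeping straight how transpose, complex conjugate and adjoint interlace with $U$ and with the real diagonal factors — since once \eqref{equation:2202} is reduced to $XE_\nu\bar{X}=E_\nu$, deducing $XX^\ast=E$ from \eqref{equation:matrixX} is a one-line manipulation.
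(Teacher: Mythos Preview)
Your proof is correct and follows essentially the same route as the paper: diagonalize $\Phi''_{z\bar z}$ via $U$, set $N=U^\ast(\Phi''_{zz}+Q)\bar U$ and $X=D_\mu^{-1}ND_\lambda^{-1}$, and translate symmetry of $Q$ into \eqref{equation:matrixX}. The only difference is in how you extract unitarity of $X$: the paper keeps $N^\ast$ (rather than replacing it by $\bar N$) in $D_\mu^2=ND_\lambda^{-2}N^\ast$, so after substituting $N=D_\mu XD_\lambda$ one reads off $XX^\ast=E$ in one line, without needing to pass through the auxiliary identity $XE_\nu\bar X=E_\nu$ and then invoke \eqref{equation:matrixX} a second time. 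Your detour is harmless but avoidable.
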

Note that (II) implies (I) also for $\rho=\lambda_0$. 
A typical example of $X$ satisfying \eqref{equation:matrixX} is a diagonal matrix of the form 
$$
X
=
\operatorname{diag}(e^{\sqrt{-1}\theta_1}, \dotsc, e^{\sqrt{-1}\theta_n}), 
\quad
\theta_1, \dotsc, \theta_n \in \mathbb{R}.
$$
\begin{proof}[Proof of Lemma~\ref{theorem:lemma4}] 
\underline{(I) $\Rightarrow$ (II)}. 
Suppose that $Q$ solves \eqref{equation:2202}. 
Then we deduce that 
\begin{align*}
& \operatorname{diag}(\mu_1^2, \dotsc, \mu_n^2)
\\
  =
& U^\ast
  (\Phi^{\prime\prime}_{zz}+Q)
  \overline{U}
  \operatorname{diag}\left(\frac{1}{\lambda_1^2}, \dotsc, \frac{1}{\lambda_n^2}\right)
  {}^tU
  (\Phi^{\prime\prime}_{zz}+Q)^\ast
  U
\\
  =
& \left\{
  U^\ast
  (\Phi^{\prime\prime}_{zz}+Q)
  \overline{U}
  \operatorname{diag}\left(\frac{1}{\lambda_1}, \dotsc, \frac{1}{\lambda_n}\right)
  \right\}
  \left\{
  U^\ast
  (\Phi^{\prime\prime}_{zz}+Q)
  \overline{U}
  \operatorname{diag}\left(\frac{1}{\lambda_1}, \dotsc, \frac{1}{\lambda_n}\right)
  \right\}^\ast,   
\end{align*}
which becomes 
\begin{align*}
  E
& =
  \left\{
  \operatorname{diag}\left(\frac{1}{\mu_1}, \dotsc, \frac{1}{\mu_n}\right)
  U^\ast
  (\Phi^{\prime\prime}_{zz}+Q)
  \overline{U}
  \operatorname{diag}\left(\frac{1}{\lambda_1}, \dotsc, \frac{1}{\lambda_n}\right)
  \right\}
\\
& \times
  \left\{
  \operatorname{diag}\left(\frac{1}{\mu_1}, \dotsc, \frac{1}{\mu_n}\right)
  U^\ast
  (\Phi^{\prime\prime}_{zz}+Q)
  \overline{U}
  \operatorname{diag}\left(\frac{1}{\lambda_1}, \dotsc, \frac{1}{\lambda_n}\right)
  \right\}^\ast. 
\end{align*}
Then, there exists a unitary matrix $X$ such that 
$$
X
=
\operatorname{diag}\left(\frac{1}{\mu_1}, \dotsc, \frac{1}{\mu_n}\right)
U^\ast
(\Phi^{\prime\prime}_{zz}+Q)
\overline{U}
\operatorname{diag}\left(\frac{1}{\lambda_1}, \dotsc, \frac{1}{\lambda_n}\right). 
$$
This implies \eqref{equation:matrixQ}. 
We need to verify ${}^tQ=Q$. Note that ${}^tQ=Q$ and 
${}^t(\Phi^{\prime\prime}_{zz}+Q)=(\Phi^{\prime\prime}_{zz}+Q)$ are mutually equivalent since ${}^t\Phi^{\prime\prime}_{zz}=\Phi^{\prime\prime}_{zz}$. 
Moreover ${}^t(\Phi^{\prime\prime}_{zz}+Q)=(\Phi^{\prime\prime}_{zz}+Q)$ 
is equivalent to 
${}^t\{U^\ast(\Phi^{\prime\prime}_{zz}+Q)\overline{U}\}=U^\ast(\Phi^{\prime\prime}_{zz}+Q)\overline{U}$. 
Hence 
$\operatorname{diag}(\mu_1,\dotsc,\mu_n)X\operatorname{diag}(\lambda_1,\dotsc,\lambda_n)$ must be a complex symmetric matrix since \eqref{equation:matrixQ} becomes
$$
U^\ast
(\Phi^{\prime\prime}_{zz}+Q)
\overline{U}
=
\operatorname{diag}(\mu_1,\dotsc,\mu_n)
X
\operatorname{diag}(\lambda_1,\dotsc,\lambda_n). 
$$
Then $X$ must satisfy 
$$
\operatorname{diag}(\lambda_1,\dotsc,\lambda_n)
{}^t
X
\operatorname{diag}(\mu_1,\dotsc,\mu_n)
=
\operatorname{diag}(\mu_1,\dotsc,\mu_n)
X
\operatorname{diag}(\lambda_1,\dotsc,\lambda_n),
$$
which is equivalent to \eqref{equation:matrixX}. 
\vspace{6pt}
\\
\underline{(II) $\Rightarrow$ (I)}. 
Suppose that (II) holds. Then we deduce that 
\begin{align*}
& U^\ast
  (\Phi^{\prime\prime}_{zz}+Q)
  \overline{(\Phi^{\prime\prime}_{z\bar{z}})^{-1}}
  (\Phi^{\prime\prime}_{zz}+Q)^\ast
  U
\\
  =
& U^\ast
  (\Phi^{\prime\prime}_{zz}+Q)
  \overline{U}
  \operatorname{diag}\left(\frac{1}{\lambda_1^2}, \dotsc, \frac{1}{\lambda_n^2}\right)
  {}^tU
  (\Phi^{\prime\prime}_{zz}+Q)^\ast
  U
\\
  =
& U^\ast
  U
  \operatorname{diag}(\mu_1,\dotsc,\mu_n)
  X
  \operatorname{diag}(\lambda_1,\dotsc,\lambda_n)
  {}^tU
  \overline{U}
  \operatorname{diag}\left(\frac{1}{\lambda_1^2}, \dotsc, \frac{1}{\lambda_n^2}\right)
\\
  \times
& {}^tU
  \overline{U}
  \operatorname{diag}(\lambda_1,\dotsc,\lambda_n)
  X^\ast
  \operatorname{diag}(\mu_1,\dotsc,\mu_n)
  U^\ast
  U
\\
  =
& \operatorname{diag}(\mu_1,\dotsc,\mu_n)
  X
  \operatorname{diag}(\lambda_1,\dotsc,\lambda_n)
  \operatorname{diag}\left(\frac{1}{\lambda_1^2}, \dotsc, \frac{1}{\lambda_n^2}\right)
\\
  \times
& \operatorname{diag}(\lambda_1,\dotsc,\lambda_n)
  X^\ast
  \operatorname{diag}(\mu_1,\dotsc,\mu_n)
\\
  =
& \operatorname{diag}(\mu_1,\dotsc,\mu_n)
  X
  X^\ast
  \operatorname{diag}(\mu_1,\dotsc,\mu_n)
\\
  =
& \operatorname{diag}(\mu_1,\dotsc,\mu_n)
  \operatorname{diag}(\mu_1,\dotsc,\mu_n)
\\
  =
& \operatorname{diag}(\mu_1^2,\dotsc,\mu_n^2). 
\end{align*}
Then $Q$ solves \eqref{equation:2202}. 
This completes the proof.
\end{proof}
We see that Condition~1 gives no new restriction on $Q$. 
\begin{lemma}
\label{theorem:lemma5} 
Fix an arbitrary $\rho \in (0,\rho_0)$. 
If $Q$ solves \eqref{equation:2202}, then Condition~1 is automatically satisfied.  
\end{lemma}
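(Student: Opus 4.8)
The plan is to insert the explicit description of $Q$ furnished by Lemma~\ref{theorem:lemma4}(II) into the quadratic form occurring in Condition~1 and to diagonalise everything by means of the unitary matrix $U$. Fix $\rho$ as in the statement, so $0<\rho<\lambda_0$, write $P:=\Phi^{\prime\prime}_{zz}+Q$, and recall from \eqref{equation:matrixQ} that
$$
P=U\operatorname{diag}(\mu_1,\dots,\mu_n)\,X\,\operatorname{diag}(\lambda_1,\dots,\lambda_n)\,{}^tU
$$
with $X$ unitary and all $\mu_i>0$. I would then introduce $w:={}^tU z$ (note $\lvert w\rvert=\lvert z\rvert$, since ${}^tU$ is unitary) and $v:=\operatorname{diag}(\lambda_1,\dots,\lambda_n)\,w$, and observe, by a direct matrix manipulation using $\overline{{}^tU}=U^\ast$, ${}^tz\,U={}^t({}^tU z)$ and the symmetry of the diagonal matrices, that
$$
\langle z,\Phi^{\prime\prime}_{z\bar{z}}\bar{z}\rangle=\lvert v\rvert^2,
\qquad
\langle z,Pz\rangle=\sum_{i=1}^n\frac{\mu_i}{\lambda_i}\,v_i(Xv)_i .
$$

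The second step is to estimate the cross term. Because $\mu_i^2=\lambda_i^2-\rho^2<\lambda_i^2$ and there are only finitely many indices, the constant $c:=\max_{1\le i\le n}\mu_i/\lambda_i$ satisfies $c<1$; this is precisely where $\rho>0$ enters. By the Cauchy--Schwarz inequality together with the unitarity of $X$ (so $\lvert Xv\rvert=\lvert v\rvert$),
$$
\Bigl\lvert\sum_{i=1}^n\frac{\mu_i}{\lambda_i}v_i(Xv)_i\Bigr\rvert
\le c\sum_{i=1}^n\lvert v_i\rvert\,\lvert(Xv)_i\rvert
\le c\,\lvert v\rvert\,\lvert Xv\rvert
= c\,\lvert v\rvert^2 .
$$
Combining the two displays and using $\lvert v_i\rvert^2=\lambda_i^2\lvert w_i\rvert^2\ge\lambda_0^2\lvert w_i\rvert^2$ with $\lvert w\rvert=\lvert z\rvert$ gives
$$
\langle z,\Phi^{\prime\prime}_{z\bar{z}}\bar{z}\rangle+\re\langle z,Pz\rangle
\ge(1-c)\lvert v\rvert^2
\ge(1-c)\lambda_0^2\,\lvert z\rvert^2 ,
$$
so Condition~1 holds with $\delta=(1-c)\lambda_0^2>0$, and since $Q$ solving \eqref{equation:2202} already implies (iii) of Condition~2, there is nothing further to check.

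The only genuinely delicate point — the step I would write out most carefully — is the bookkeeping in the change of variables: keeping track of which factors of $U$ are conjugated and which are transposed, and remembering that $\langle z,\Phi^{\prime\prime}_{z\bar{z}}\bar{z}\rangle$ is sesquilinear in $z$ (turning into $\lvert v\rvert^2$ via $\overline{{}^tU}=U^\ast$) whereas $\langle z,Pz\rangle$ is a true bilinear form (turning into a sum of products $v_i(Xv)_i$, not $v_i\overline{(Xv)_i}$). Once both quadratic forms are expressed through the single vector $v$, the estimate is immediate, the whole point being that the ``cross'' coefficients $\mu_i/\lambda_i$ are bounded away from $1$ uniformly in $i$.
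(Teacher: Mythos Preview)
Your proof is correct and follows essentially the same line as the paper's: pass to the variable $w={}^tUz$, express both quadratic forms through the diagonalised data, and bound the cross term via Cauchy--Schwarz together with $\lvert Xv\rvert=\lvert v\rvert$. The only cosmetic difference is in the final numerical bound: the paper keeps the two vectors $\zeta_\lambda=\operatorname{diag}(\lambda_i)w$ and $\zeta_\mu=\operatorname{diag}(\mu_i)w$ separate, estimates $\operatorname{Re}\langle\zeta_\mu,X\zeta_\lambda\rangle\ge-\lvert\zeta_\mu\rvert\,\lvert\zeta_\lambda\rvert$, and then uses $\lvert\zeta_\lambda\rvert^2-\lvert\zeta_\mu\rvert^2=\rho^2\lvert\zeta\rvert^2$ to obtain the sharp constant $\delta=\rho^2/2$, whereas you factor out $c=\max_i\mu_i/\lambda_i<1$ and land on $\delta=(1-c)\lambda_0^2$; either constant suffices for Condition~1.
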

\begin{proof}
Suppose that $Q$ solves \eqref{equation:2202}, that is, 
(II) of Lemma~\ref{theorem:lemma4} holds. 
For $z\ne0$, set 
$\zeta={}^t(\zeta_1,\dotsc,\zeta_n):={}^tUz$, 
and 
$$
\zeta_\lambda
:=
\operatorname{diag}(\lambda_1,\dotsc,\lambda_n)\zeta
=
{}^t(\lambda_1\zeta_1,\dotsc,\lambda_n\zeta_n), 
\quad
\zeta_\mu
:=
\operatorname{diag}(\mu_1,\dotsc,\mu_n)\zeta
=
{}^t(\mu_1\zeta_1,\dotsc,\mu_n\zeta_n).  
$$
Note that 
$\lvert{z}\rvert=\lvert\zeta\rvert$,   
$\lvert{X\zeta_\lambda}\rvert=\lvert\zeta_\lambda\rvert$ 
and 
$\lvert\zeta_\lambda\rvert\geqslant\lvert\zeta_\mu\rvert$.  
We deduce that 
\begin{align*}
& 2
  \langle{z,\Phi^{\prime\prime}_{z\bar{z}}\bar{z}}\rangle
  +
  2
  \operatorname{Re}\langle{z,(\Phi^{\prime\prime}_{zz}+Q)z}\rangle 
\\
  =
& 2
  \langle{z,U\operatorname{diag}(\lambda_1^2,\dotsc,\lambda_n^2)U^\ast\bar{z}}\rangle
\\
  +
& 2
  \operatorname{Re}\langle{z,U\operatorname{diag}(\mu_1,\dotsc,\mu_n)X\operatorname{diag}(\lambda_1,\dotsc,\lambda_n){}^tUz}\rangle 
\\
  =
& 2
  \langle{\zeta,\operatorname{diag}(\lambda_1^2,\dotsc,\lambda_n^2)\bar{\zeta}}\rangle
\\
  +
& 2
  \operatorname{Re}\langle{\zeta,\operatorname{diag}(\mu_1,\dotsc,\mu_n)X\operatorname{diag}(\lambda_1,\dotsc,\lambda_n)\zeta}\rangle 
\\
  =
& 2
  \langle{\zeta_\lambda,\overline{\zeta_\lambda}}\rangle
  +
  2
  \operatorname{Re}\langle{\zeta_\mu,X\zeta_\lambda}\rangle 
\\
  \geqslant
& 2
  \lvert\zeta_\lambda\rvert^2
  -
  2
  \lvert\zeta_\mu\rvert
  \lvert{X\zeta_\lambda}\rvert
\\
  =
& 2
  \lvert\zeta_\lambda\rvert^2
  -
  2
  \lvert\zeta_\mu\rvert
  \lvert\zeta_\lambda\rvert
\\
  =
& 2
  \lvert\zeta_\lambda\rvert
  (\lvert\zeta_\lambda\rvert-\lvert\zeta_\mu\rvert)
\\
  =
& \frac{2\lvert\zeta_\lambda\rvert}{\lvert\zeta_\lambda\rvert-\lvert\zeta_\mu\rvert}
  \bigl(\lvert\zeta_\lambda\rvert^2-\lvert\zeta_\mu\rvert^2\bigr)
\\
  \geqslant
& \lvert\zeta_\lambda\rvert^2-\lvert\zeta_\mu\rvert^2
\\
  =
& \sum_{i=1}^n 
  (\lambda_i^2-\mu_i^2)
  \lvert\zeta_i\rvert^2
\\
  =
& \rho^2
  \sum_{i=1}^n 
  \lvert\zeta_i\rvert^2
  =
  \rho^2\lvert\zeta\rvert^2, 
\end{align*}
which is desired. 
This completes the proof. 
\end{proof} 
Combining Lemmas~\ref{theorem:lemma1}, \ref{theorem:lemma4} and \ref{theorem:lemma5}, we have the results of the present section. 
\begin{theorem}
\label{theorem:theorem6} 
Fix an arbitrary $\rho \in (0,\rho_0)$. 
Conditions~1 and 2 hold if and only if there exists an 
$n \times n$ unitary matrix $X$ satisfying {\rm \eqref{equation:matrixX}} 
such that $Q$ is determined by {\rm \eqref{equation:matrixQ}}. 
In this case the anihilation operator $\Lambda$ is defined by 
$\Lambda=\partial/\partial z +2Qz$. 
\end{theorem}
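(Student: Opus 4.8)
The plan is to assemble the equivalence directly from Lemmas~\ref{theorem:lemma1}, \ref{theorem:lemma2103}, \ref{theorem:lemma4} and \ref{theorem:lemma5}, together with the discussion of \eqref{equation:2201} and \eqref{equation:2202} preceding Lemma~\ref{theorem:lemma4}; no new computation is needed. The organizing observation is that imposing (i) of Condition~2 already rigidifies the whole construction: by Lemma~\ref{theorem:lemma1}, (i) of Condition~2 is equivalent to $R=2Q$, i.e.\ $\Lambda=\partial/\partial z+2Qz$, and then (ii) of Condition~2 is automatic and $\Lambda^\ast$ is the explicit operator of Lemma~\ref{theorem:lemma2}. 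Hence, under (i), Condition~2 amounts to (iii) alone.

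First I would treat the forward direction. Assume Conditions~1 and 2 hold, with the fixed $\rho$. By Lemma~\ref{theorem:lemma1}, $\Lambda=\partial/\partial z+2Qz$, so Lemma~\ref{theorem:lemma2103} applies and the requirement $[\Lambda_i,\Lambda_j^\ast]=2\rho^2\delta_{ij}$ from (iii) of Condition~2 is precisely \eqref{equation:2201}. As explained after Lemma~\ref{theorem:lemma2103}, the right-hand side of \eqref{equation:2201} is a nonnegative Hermitian matrix, so with $\Phi^{\prime\prime}_{z\bar{z}}=U\operatorname{diag}(\lambda_1^2,\dotsc,\lambda_n^2)U^\ast$ and $\mu_i^2:=\lambda_i^2-\rho^2$ the relation \eqref{equation:2201} is equivalent to \eqref{equation:2202}; since $\rho\in(0,\lambda_0)$, every $\mu_i$ is strictly positive. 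Thus $Q$ solves \eqref{equation:2202}, and the implication (I)$\Rightarrow$(II) of Lemma~\ref{theorem:lemma4} produces a unitary $X$ satisfying \eqref{equation:matrixX} with $Q$ given by \eqref{equation:matrixQ}.

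For the converse, suppose such a unitary $X$ exists and define $Q$ by \eqref{equation:matrixQ}. The argument inside Lemma~\ref{theorem:lemma4} shows that \eqref{equation:matrixX} is exactly what makes $\operatorname{diag}(\mu_1,\dotsc,\mu_n)X\operatorname{diag}(\lambda_1,\dotsc,\lambda_n)$, hence $\Phi^{\prime\prime}_{zz}+Q$, and hence $Q$, complex symmetric, so $Q$ is a legitimate choice. Put $\Lambda:=\partial/\partial z+2Qz$; then (i) and (ii) of Condition~2 hold by Lemma~\ref{theorem:lemma1}. By the implication (II)$\Rightarrow$(I) of Lemma~\ref{theorem:lemma4}, $Q$ solves \eqref{equation:2202}, hence \eqref{equation:2201}, and Lemma~\ref{theorem:lemma2103} then gives $[\Lambda,\Lambda^\ast]=2\rho^2E$, which is (iii) of Condition~2 for this $\rho$. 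Finally, Lemma~\ref{theorem:lemma5} guarantees that $Q$ solving \eqref{equation:2202} already forces Condition~1. Hence Conditions~1 and 2 hold and $\Lambda=\partial/\partial z+2Qz$, as claimed.

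There is no genuine obstacle left after the lemmas; the proof is pure bookkeeping, and the main thing to be careful about is the order in which the previous results are invoked. Three points deserve attention: (a) one may only invoke Lemma~\ref{theorem:lemma2103} after (i) of Condition~2 has fixed the form of $\Lambda$; (b) Condition~1 contributes no new constraint once Condition~2 is in force, which is exactly the content of Lemma~\ref{theorem:lemma5}, so ``Conditions~1 and 2'' collapses to ``Condition~2''; and (c) although Condition~2(iii) as stated only demands \emph{some} $\rho>0$, the discussion before Lemma~\ref{theorem:lemma4} shows that any admissible $\rho$ must lie in $(0,\lambda_0]$, so fixing $\rho\in(0,\lambda_0)$ excludes only the borderline case $\rho=\lambda_0$, which is deliberately set aside.
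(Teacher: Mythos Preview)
Your proposal is correct and follows exactly the paper's approach: the paper's proof of Theorem~\ref{theorem:theorem6} is the single sentence ``Combining Lemmas~\ref{theorem:lemma1}, \ref{theorem:lemma4} and \ref{theorem:lemma5}, we have the results of the present section,'' and you have simply spelled out this combination in full, also making explicit the role of Lemma~\ref{theorem:lemma2103} and the passage from \eqref{equation:2201} to \eqref{equation:2202}.
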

%
%
\section{Properties}
\label{section:properties}
In what follows we fix arbitrary $\rho\in(0,\rho_0)$ 
and an arbitrary $n \times n$ unitary matrix $X$ satisfying \eqref{equation:matrixX}. 
Define $Q$ by \eqref{equation:matrixQ}, 
and set $\tilde{\psi}_0(z)=\exp(-\langle{z,Qz}\rangle)$ 
and $\Lambda=\partial/\partial z +2Qz$. 
We denote the set of nonnegative integers by $\mathbb{N}_0$. 
For a multi-index $\alpha=(\alpha_1,\dotsc,\alpha_n)\in\mathbb{N}_0^n$, 
set $\alpha!=\alpha_1!\dotsb\alpha_n!$, 
$\lvert\alpha\rvert=\alpha_1+\dotsb+\alpha_n$ and 
$$
\tilde{\psi}_\alpha(z)
:=
(\Lambda^\ast)^\alpha\tilde{\psi}_0(z)
=
(\Lambda^\ast_1)^{\alpha_1}\dotsb(\Lambda^\ast_n)^{\alpha_n}\tilde{\psi}_0(z). 
$$
The last one is well-defined since $[\Lambda^\ast_i,\Lambda^\ast_j]=0$. 
It follows that 
$\{\tilde{\psi}_\alpha\}_{\alpha\in\mathbb{N}_0^n} \subset \mathscr{H}_\Phi(\mathbb{C}^n)$ 
since 
Lemma~\ref{theorem:lemma5} implies that 
$\lvert\tilde{\psi}_0(z)\rvert^2e^{-2\Phi(z)} \leqslant e^{-\rho^2\lvert{z}\rvert^2}$. 
In the present section we study the properties of 
$\{\tilde{\psi}_\alpha\}_{\alpha\in\mathbb{N}_0^n}$. 
\par
We first check that $\{\tilde{\psi}_\alpha\}_{\alpha\in\mathbb{N}_0^n}$ is an orthogonal system of $\mathscr{H}_\Phi(\mathbb{C}^n)$ and a family of eigenfunctions for some Hamiltonian like that of the harmonic oscillator on $\mathbb{R}^n$. 
Set $X_i=\Lambda_i\Lambda_i^\ast$ and $Y_i=\Lambda_i^\ast\Lambda_i$ for $i=1,\dotsc,n$. 
Then we have 
$$
X_i=Y_i+2\rho^2, 
\quad
Y_i\tilde{\psi}_0=0, 
\quad
X_i\Lambda_i=\Lambda_i=\Lambda_iY_i, 
\quad
\Lambda_i^\ast X_i=Y_i \Lambda_i^\ast. 
$$
For our purpose, we need the following computation. We say that 
$$
\beta<\alpha, 
\quad
\text{for}
\quad
\alpha=(\alpha_1,\dotsc,\alpha_n), 
\beta=(\beta_1,\dotsc,\beta_n)
$$
if $\beta_i<\alpha_i$ for all $i=1,\dotsc,n$. 
\begin{lemma}
\label{theorem:computation} 
For $i=1,\dotsc,n$, $m=1,2,3,\dotsc$ and $\alpha\ne(0,\dotsc,0)$, we have 
\begin{align}
  \Lambda_i(\Lambda_i^\ast)^m
& =
  (\Lambda_i^\ast)^m\Lambda_i
  +
  2m\rho^2(\Lambda_i^\ast)^{m-1},
\label{equation:computation1}
\\
  \Lambda_i^m(\Lambda_i^\ast)^m
& =
  \prod_{k=1}^m
  (Y_i+2k\rho^2), 
\label{equation:computation2}
\\
  \Lambda_i^{m+1}(\Lambda_i^\ast)^m
& =
  \left\{
  \prod_{k=1}^m
  (X_i+2k\rho^2)
  \right\}
  \Lambda_i, 
\label{equation:computation3}
\\
  \Lambda^\alpha(\Lambda^\ast)^\alpha
& =
  Y^\alpha
  +
  \sum_{0<\beta<\alpha}
  C_\beta Y^\beta
  +
  (2\rho^2)^{\lvert\alpha\rvert}\alpha!
\label{equation:computation4}
\end{align}
where $C_\beta\in\mathbb{C}$ for $0<\beta<\alpha$ are appropriate constants. 
\end{lemma}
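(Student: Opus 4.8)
The plan is to establish the four identities one after another, each by a short induction on $m$ (for \eqref{equation:computation1}--\eqref{equation:computation3}) or, for \eqref{equation:computation4}, by reduction to \eqref{equation:computation2}. The only inputs are the canonical commutation relations of Condition~2, namely $[\Lambda_i,\Lambda_j]=[\Lambda_i^\ast,\Lambda_j^\ast]=0$ and $[\Lambda_i,\Lambda_j^\ast]=2\rho^2\delta_{ij}$, together with the elementary consequences $X_i=Y_i+2\rho^2$, $\Lambda_iY_i=(Y_i+2\rho^2)\Lambda_i$ and $Y_i\Lambda_i^\ast=\Lambda_i^\ast(Y_i+2\rho^2)$, the latter two obtained from $[\Lambda_i,\Lambda_i^\ast]=2\rho^2$ by multiplying on the right by $\Lambda_i$ and on the left by $\Lambda_i^\ast$ respectively. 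Iterating these yields the ``shift rule'' $\Lambda_i^{\,j}(Y_i+c)=(Y_i+c+2j\rho^2)\Lambda_i^{\,j}$ for every scalar $c$ and $j\in\mathbb{N}_0$, which is the move I would use repeatedly; note also that any two polynomials in a single $Y_i$ commute.

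For \eqref{equation:computation1} I would induct on $m$: the case $m=1$ is just $[\Lambda_i,\Lambda_i^\ast]=2\rho^2$, and the step rewrites $\Lambda_i(\Lambda_i^\ast)^{m+1}=\bigl(\Lambda_i(\Lambda_i^\ast)^m\bigr)\Lambda_i^\ast$, inserts the inductive hypothesis, and commutes the one surviving $\Lambda_i$ past $\Lambda_i^\ast$ a final time. For \eqref{equation:computation2} I would write $\Lambda_i^m(\Lambda_i^\ast)^m=\Lambda_i^{m-1}(\Lambda_i\Lambda_i^\ast)(\Lambda_i^\ast)^{m-1}=\Lambda_i^{m-1}(Y_i+2\rho^2)(\Lambda_i^\ast)^{m-1}$ and move the $m-1$ copies of $\Lambda_i$ to the right through the number operator by the shift rule with $c=2\rho^2$, $j=m-1$, which produces the factor $Y_i+2m\rho^2$ in front of $\Lambda_i^{m-1}(\Lambda_i^\ast)^{m-1}$; the inductive hypothesis then gives $\prod_{k=1}^m(Y_i+2k\rho^2)$. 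Identity \eqref{equation:computation3} is then immediate from \eqref{equation:computation2}: $\Lambda_i^{m+1}(\Lambda_i^\ast)^m=\Lambda_i\prod_{k=1}^m(Y_i+2k\rho^2)=\prod_{k=1}^m(Y_i+2\rho^2+2k\rho^2)\Lambda_i=\prod_{k=1}^m(X_i+2k\rho^2)\Lambda_i$.

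For \eqref{equation:computation4} the key point is that $\Lambda_i$, $\Lambda_i^\ast$, and hence $Y_i$, commute with $\Lambda_j$, $\Lambda_j^\ast$, $Y_j$ for $i\ne j$ by Condition~2(ii),(iii); this lets me regroup the product index by index, $\Lambda^\alpha(\Lambda^\ast)^\alpha=\prod_{i=1}^n\Lambda_i^{\alpha_i}(\Lambda_i^\ast)^{\alpha_i}=\prod_{i=1}^n\prod_{k=1}^{\alpha_i}(Y_i+2k\rho^2)$ by \eqref{equation:computation2}. Expanding each inner factor as $Y_i^{\alpha_i}$ plus lower powers of $Y_i$ plus the constant $\prod_{k=1}^{\alpha_i}2k\rho^2=(2\rho^2)^{\alpha_i}\alpha_i!$ and multiplying over $i$, the top-degree term is $\prod_iY_i^{\alpha_i}=Y^\alpha$, the purely constant term is $\prod_i(2\rho^2)^{\alpha_i}\alpha_i!=(2\rho^2)^{\lvert\alpha\rvert}\alpha!$, and everything in between collects into a linear combination $\sum C_\beta Y^\beta$ of intermediate monomials, which is the asserted shape (and $\alpha\ne0$ ensures these three pieces are genuinely distinct). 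I do not expect a real obstacle here; the only thing demanding care is consistent tracking of the $2k\rho^2$-shifts in \eqref{equation:computation2}--\eqref{equation:computation3} and, in \eqref{equation:computation4}, verifying the constant term via $\prod_{k=1}^{\alpha_i}2k=2^{\alpha_i}\alpha_i!$.
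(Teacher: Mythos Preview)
Your proposal is correct and follows essentially the same route as the paper: induction on $m$ using the CCR and the shift rule $\Lambda_i(Y_i+c)=(Y_i+c+2\rho^2)\Lambda_i$ for \eqref{equation:computation1}--\eqref{equation:computation3}, and then \eqref{equation:computation2} together with the cross-commutativity $[\Lambda_i,\Lambda_j^\ast]=0$ for $i\ne j$ to regroup and expand for \eqref{equation:computation4}. The only cosmetic differences are that for \eqref{equation:computation2} you peel off the middle pair $\Lambda_i\Lambda_i^\ast$ while the paper flanks the inductive product by an extra $\Lambda_i$ and $\Lambda_i^\ast$, and for \eqref{equation:computation3} you deduce it directly from \eqref{equation:computation2} via one more shift, whereas the paper repeats the induction; both are equally valid.
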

\begin{proof} 
\underline{\eqref{equation:computation1}}.\ 
The commutator $[\Lambda_i,\Lambda_i^\ast]=2\rho^2$ shows that 
\eqref{equation:computation1} holds for $m=1$. 
Suppose that \eqref{equation:computation1} holds for some $m\in\mathbb{N}$. 
We deduce that 
\begin{align*}
  \Lambda_i(\Lambda_i^\ast)^{m+1}
& =
  \{\Lambda_i\Lambda_i^\ast\}(\Lambda_i^\ast)^m
\\
& =
  \{\Lambda_i^\ast\Lambda_i+2\rho^2\}(\Lambda_i^\ast)^m 
\\
& =
  \Lambda_i^\ast\{\Lambda_i(\Lambda_i^\ast)^m\}+2\rho^2(\Lambda_i^\ast)^m
\\
& =
  \Lambda_i^\ast
  \{
  (\Lambda_i^\ast)^m\Lambda_i
  +
  2m\rho^2(\Lambda_i^\ast)^{m-1}
  \}
  +
  2\rho^2(\Lambda_i^\ast)^m
\\
& =
  (\Lambda_i^\ast)^{m+1}\Lambda_i
  +
  2(m+1)\rho^2(\Lambda_i^\ast)^m, 
\end{align*}
which shows that \eqref{equation:computation1} holds for $m+1$. 
The induction arguments on $m$ prove that 
\eqref{equation:computation1} holds for all $m\in\mathbb{N}$. 
\\
\underline{\eqref{equation:computation2}}.\ 
An elementary computation gives for $k\in\mathbb{N}_0$
\begin{equation}
\Lambda_i(Y_i+2k\rho^2)
=
\Lambda_iY_i+2k\rho^2\Lambda_i
=
(X_i+2k\rho^2)\Lambda_i
=
\bigl(Y_i+2(k+1)\rho^2\bigr)\Lambda_i. 
\label{equation:XY}
\end{equation}
The commutator $[\Lambda_i,\Lambda_i^\ast]=2\rho^2$ shows that 
\eqref{equation:computation2} holds for $m=1$. 
Suppose that \eqref{equation:computation2} holds for some $m\in\mathbb{N}$. 
Repeating \eqref{equation:XY} $n$ times, we deduce that 
\begin{align*}
  \Lambda_i^{m+1}(\Lambda_i^\ast)^{m+1}
& =
  \Lambda_i
  \{\Lambda_i^m(\Lambda_i^\ast)^m\}
  \Lambda_i^\ast
\\
& =
  \Lambda_i
  \left\{
  \prod_{k=1}^m
  (Y_i+2k\rho^2) 
  \right\}
  \Lambda_i^\ast
\\
& =
  \Lambda_i
  (Y_i+2\rho^2)
  \left\{
  \prod_{k=2}^m
  (Y_i+2k\rho^2) 
  \right\}
  \Lambda_i^\ast
\\
& =
  (X_i+2\rho^2)
  \Lambda_i
  \left\{
  \prod_{k=2}^m
  (Y_i+2k\rho^2) 
  \right\}
  \Lambda_i^\ast
\\
& =
  \dotsb
\\
& =
  \left\{
  \prod_{k=1}^m
  (X_i+2k\rho^2) 
  \right\}
  \Lambda_i\Lambda_i^\ast
\\
& =
  \left\{
  \prod_{k=2}^{m+1}
  (Y_i+2k\rho^2) 
  \right\}
  (Y_i+2\rho^2)
\\
& =
  \prod_{k=1}^{m+1}
  (Y_i+2k\rho^2),  
\end{align*}
which shows that \eqref{equation:computation2} holds for $m+1$. 
The induction arguments on $m$ prove that 
\eqref{equation:computation2} holds for all $m\in\mathbb{N}$.
\\
\underline{\eqref{equation:computation3}}.\ 
One can prove \eqref{equation:computation3} 
in the same way as \eqref{equation:computation2}. 
We omit the detail.
\\
\underline{\eqref{equation:computation4}}.\ 
By using \eqref{equation:computation2}, we deduce that 
\begin{align*}
  \Lambda^\alpha(\Lambda^\ast)^\alpha
& =
  \prod_{i=1}^n
  \{\Lambda_i^{\alpha_i}(\Lambda_i^\ast)^{\alpha_i}\}
\\
& =
  \prod_{i=1}^n
  \left\{
  \prod_{k_i=1}^{\alpha_i}
  (Y_i+2k_i\rho^2)
  \right\}
\\
& =
  Y^\alpha
  +
  \sum_{0<\beta<\alpha}
  C_\beta Y^\beta
  +
  \prod_{i=1}^n
  \left\{
  \prod_{k_i=1}^{\alpha_i}
  (2k_i\rho^2)
  \right\}
\\
& =
  Y^\alpha
  +
  \sum_{0<\beta<\alpha}
  C_\beta Y^\beta
  +
  \prod_{i=1}^n
  \left\{
  (2\rho^2)^{\alpha_i}\alpha_i!
  \right\}
\\
& =
  Y^\alpha
  +
  \sum_{0<\beta<\alpha}
  C_\beta Y^\beta
  +
  (2\rho^2)^{\lvert\alpha\rvert}
  \alpha!.
\end{align*}
This completes the proof.  
\end{proof}
Here we introduce an operator $H_\rho$ defined by 
$$
H_\rho
=
\Lambda_1^\ast\Lambda_1+\dotsb+\Lambda_n^\ast\Lambda_n+rho^2, 
$$
which corresponds to the Hamiltonian of the harmonic oscillator of the form 
$$
\sum_{i=1}^n
\left(
-
\frac{\partial^2}{\partial x_i^2}
+
x_i^2
\right), 
\quad
x=(x_1,\dotsc,x_n)\in\mathbb{R}^n.
$$
The family of functions $\{\tilde{\psi}_\alpha\}$ 
is an orthogonal system of eigenfunctions of $H_\rho$. 
\begin{theorem}
\label{theorem:hamiltonian}
For $\alpha,\beta\in(\mathbb{N}_0)^n$, 
\begin{equation}
H_\rho\tilde{\psi}_\alpha
=
(2\lvert\alpha\rvert+1)\rho^2\tilde{\psi}_\alpha,
\label{equation:eigen} 
\end{equation}
\begin{equation}
(\tilde{\psi}_\alpha,\tilde{\psi}_\beta)_{\mathscr{H}_\Phi(\mathbb{C}^n)}
=
\delta_{\alpha\beta}
(2\rho^2)^{\lvert\alpha\rvert}
\alpha!
\lVert\tilde{\psi}_0\rVert_{\mathscr{H}_\Phi(\mathbb{C}^n)}^2. 
\label{equation:orthogonal}
\end{equation}
\end{theorem}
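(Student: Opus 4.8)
The plan is to push everything through the operator algebra of $\Lambda_1,\dots,\Lambda_n$ and their adjoints already recorded in Lemma~\ref{theorem:computation}, together with the structural relations $\Lambda_i\tilde\psi_0=0$, $[\Lambda_i,\Lambda_j]=[\Lambda_i^\ast,\Lambda_j^\ast]=0$ and $[\Lambda_i,\Lambda_j^\ast]=2\rho^2\delta_{ij}$ available in the present section. Write $e_i$ for the multi-index with a $1$ in slot $i$ and $0$ elsewhere, with the convention $\tilde\psi_{\alpha-e_i}:=0$ when $\alpha_i=0$.

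First I would prove the single annihilation identity
$$
\Lambda_i\tilde\psi_\alpha=2\alpha_i\rho^2\tilde\psi_{\alpha-e_i},\qquad i=1,\dots,n.
$$
Since $\Lambda_i$ commutes with $\Lambda_j^\ast$ for $j\ne i$, one moves $\Lambda_i$ inward until it meets the block $\Lambda_i(\Lambda_i^\ast)^{\alpha_i}$, applies \eqref{equation:computation1} to rewrite this as $(\Lambda_i^\ast)^{\alpha_i}\Lambda_i+2\alpha_i\rho^2(\Lambda_i^\ast)^{\alpha_i-1}$, and observes that in the first summand $\Lambda_i$ can be carried all the way to the right, where $\Lambda_i\tilde\psi_0=0$ kills it. Applying $\Lambda_i^\ast$ to this identity and using $[\Lambda_i^\ast,\Lambda_j^\ast]=0$ to reconstitute $(\Lambda^\ast)^\alpha$ gives $Y_i\tilde\psi_\alpha=\Lambda_i^\ast\Lambda_i\tilde\psi_\alpha=2\alpha_i\rho^2\tilde\psi_\alpha$; summing over $i$ and adding $\rho^2\tilde\psi_\alpha$ yields $H_\rho\tilde\psi_\alpha=(2\lvert\alpha\rvert+1)\rho^2\tilde\psi_\alpha$, which is \eqref{equation:eigen}.

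For \eqref{equation:orthogonal} I would use that $\Lambda_i^\ast$ is by construction (Lemma~\ref{theorem:lemma2}) the adjoint of $\Lambda_i$ on $\mathscr{H}_\Phi(\mathbb{C}^n)$; every function below is a polynomial times the Gaussian $\tilde\psi_0\in\mathscr{H}_\Phi(\mathbb{C}^n)$ (Lemma~\ref{theorem:lemma5}), so these adjoint relations apply with no boundary terms. Peeling the factors of $(\Lambda^\ast)^\beta$ off one at a time and then using $[\Lambda_i,\Lambda_j]=0$ to recombine, one obtains
$$
(\tilde\psi_\alpha,\tilde\psi_\beta)_{\mathscr{H}_\Phi(\mathbb{C}^n)}
=\bigl((\Lambda^\ast)^\alpha\tilde\psi_0,(\Lambda^\ast)^\beta\tilde\psi_0\bigr)_{\mathscr{H}_\Phi(\mathbb{C}^n)}
=\bigl(\Lambda^\beta\tilde\psi_\alpha,\tilde\psi_0\bigr)_{\mathscr{H}_\Phi(\mathbb{C}^n)}.
$$
Iterating the annihilation identity gives $\Lambda^\beta\tilde\psi_\alpha=(2\rho^2)^{\lvert\beta\rvert}\bigl(\prod_i\alpha_i!/(\alpha_i-\beta_i)!\bigr)\tilde\psi_{\alpha-\beta}$ when $\beta_i\leqslant\alpha_i$ for every $i$, and $\Lambda^\beta\tilde\psi_\alpha=0$ as soon as $\beta_i>\alpha_i$ for some $i$. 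Hence $(\tilde\psi_\alpha,\tilde\psi_\beta)=0$ whenever $\beta_i>\alpha_i$ for some $i$; taking complex conjugates and exchanging $\alpha$ and $\beta$ also gives $(\tilde\psi_\alpha,\tilde\psi_\beta)=\overline{(\tilde\psi_\beta,\tilde\psi_\alpha)}=0$ whenever $\alpha_i>\beta_i$ for some $i$. When $\alpha\ne\beta$ one of these two situations necessarily occurs, which is the vanishing asserted in \eqref{equation:orthogonal}. When $\alpha=\beta$ the same formula reads $\Lambda^\alpha\tilde\psi_\alpha=(2\rho^2)^{\lvert\alpha\rvert}\alpha!\,\tilde\psi_0$ (alternatively, apply \eqref{equation:computation4} to $\tilde\psi_0$ and use $Y^\gamma\tilde\psi_0=0$ for every $\gamma\ne(0,\dots,0)$), so that $(\tilde\psi_\alpha,\tilde\psi_\alpha)_{\mathscr{H}_\Phi(\mathbb{C}^n)}=(2\rho^2)^{\lvert\alpha\rvert}\alpha!\,\lVert\tilde\psi_0\rVert_{\mathscr{H}_\Phi(\mathbb{C}^n)}^2$.

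The bulk of this is bookkeeping with commutators and multi-indices; the one step genuinely requiring attention is the orthogonality for $\alpha\ne\beta$ with $\lvert\alpha\rvert=\lvert\beta\rvert$, because the eigenvalue equation \eqref{equation:eigen} does not separate such $\tilde\psi_\alpha$ and $\tilde\psi_\beta$ (they share the eigenvalue $(2\lvert\alpha\rvert+1)\rho^2$). It is precisely the identity $\Lambda^\beta\tilde\psi_\alpha=0$ for $\beta$ not dominated by $\alpha$, together with the Hermiticity of the inner product (to reach the symmetric case where $\alpha$ is not dominated by $\beta$), that does the work here.
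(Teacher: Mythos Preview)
Your proof is correct and follows essentially the same route as the paper: both arguments push $\Lambda_i$ through $(\Lambda^\ast)^\alpha$ via \eqref{equation:computation1} to get the eigenvalue equation, and both transfer creation operators across the inner product to reduce orthogonality to $\Lambda_i\tilde\psi_0=0$. The only organizational difference is that you package the key step as a single annihilation identity $\Lambda_i\tilde\psi_\alpha=2\alpha_i\rho^2\tilde\psi_{\alpha-e_i}$ and iterate it (invoking Hermiticity to cover both cases $\beta\not\leqslant\alpha$ and $\alpha\not\leqslant\beta$), whereas the paper assumes without loss of generality $\alpha_i\geqslant\beta_i+1$, moves only the $(\Lambda_i^\ast)^{\beta_i+1}$ block across, and applies \eqref{equation:computation3} directly; for the norm the paper invokes \eqref{equation:computation4}, which you mention as an alternative to your iterated identity.
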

\begin{proof}
Recall $\Lambda_i\tilde{\psi}_0=0$. 
By using \eqref{equation:computation1} of Lemma~\ref{theorem:computation}, 
we deduce that 
\begin{align*}
  H_\rho\tilde{\psi}_\alpha
& =
  \sum_{i=1}^n
  \Lambda_i^\ast\Lambda_i(\Lambda^\ast)^\alpha
  \tilde{\psi}_0
  +
  \rho^2
  \tilde{\psi}_\alpha
\\
& =
  \sum_{i=1}^n
  \left\{
  \prod_{j \ne i}
  (\Lambda_j^\ast)^{\alpha_j}
  \right\}
  \Lambda_i^\ast\Lambda_i(\Lambda_i^\ast)^{\alpha_i}
  \tilde{\psi}_0
  +
  \rho^2
  \tilde{\psi}_\alpha
\\
& =
  \sum_{i=1}^n
  \left\{
  \prod_{j \ne i}
  (\Lambda_j^\ast)^{\alpha_j}
  \right\}
  \{(\Lambda_i^\ast)^{\alpha_i+1}\Lambda_i+2\alpha_i\rho^2(\Lambda_i^\ast)^{\alpha_i}\}
  \tilde{\psi}_0
  +
  \rho^2
  \tilde{\psi}_\alpha
\\
& =
  \sum_{i=1}^n
  \left\{
  \prod_{j \ne i}
  (\Lambda_j^\ast)^{\alpha_j}
  \right\}
  \{2\alpha_i\rho^2(\Lambda_i^\ast)^{\alpha_i}\}
  \tilde{\psi}_0
  +
  \rho^2
  \tilde{\psi}_\alpha
\\
& =
  \sum_{i=1}^n
  \{2\alpha_i\rho^2\} 
  (\Lambda^\ast)^{\alpha}
  \tilde{\psi}_0
  +
  \rho^2
  \tilde{\psi}_\alpha
\\
& =
  \sum_{i=1}^n
  \{2\alpha_i\rho^2\} 
  \tilde{\psi}_\alpha
  +
  \rho^2
  \tilde{\psi}_\alpha
\\
& =
  (2\lvert\alpha\rvert+1)\rho^2
  \tilde{\psi}_\alpha, 
\end{align*}
which is \eqref{equation:eigen}. 
We show \eqref{equation:orthogonal}. 
Suppose that $\alpha\ne\beta$. Then $\alpha_i\ne\beta_i$ for some $i=1,\dotsc,n$. 
Without loss of generality, we may assume that $\alpha_i\geqslant\beta_i+1$. 
By using \eqref{equation:computation3} of Lemma~\ref{theorem:computation}, 
we deduce that 
\begin{align*}
& (\tilde{\psi}_\alpha,\tilde{\psi}_\beta)_{\mathscr{H}_\Phi(\mathbb{C}^n)}
\\
  =
& \left(
  (\Lambda^\ast)^\alpha\tilde{\psi}_0,
  (\Lambda^\ast)^\beta\tilde{\psi}_0,
  \right)_{\mathscr{H}_\Phi(\mathbb{C}^n)}
\\
  =
& \left(
  \left\{
  \prod_{j=1}^n(\Lambda_j^\ast)^{\alpha_j}
  \right\}
  \tilde{\psi}_0,
  \left\{
  \prod_{k=1}^n(\Lambda_j^\ast)^{\beta_k}
  \right\}
  \tilde{\psi}_0,
  \right)_{\mathscr{H}_\Phi(\mathbb{C}^n)}
\\
  =
& \left(
  \left\{
  \prod_{j{\ne}i}(\Lambda_j^\ast)^{\alpha_j}
  \right\}
  (\Lambda_i^\ast)^{\alpha_i-\beta_i-1}
  \tilde{\psi}_0,
  \left\{
  \prod_{k{\ne}i}(\Lambda_j^\ast)^{\beta_k}
  \right\}
  \{\Lambda_i^{\beta_i+1}(\Lambda_i^\ast)^{\beta_i}\}
  \tilde{\psi}_0,
  \right)_{\mathscr{H}_\Phi(\mathbb{C}^n)}
\\
  =
& \left(
  \left\{
  \prod_{j{\ne}i}(\Lambda_j^\ast)^{\alpha_j}
  \right\}
  (\Lambda_i^\ast)^{\alpha_i-\beta_i-1}
  \tilde{\psi}_0,
  \left\{
  \prod_{k{\ne}i}(\Lambda_j^\ast)^{\beta_k}
  \right\}
  \left\{
  \prod_{l=1}^{\beta_i}
  (X_i+2k\rho^2)
  \right\}
  \Lambda_i
  \tilde{\psi}_0,
  \right)_{\mathscr{H}_\Phi(\mathbb{C}^n)}
\\
  =
& 0, 
\end{align*}
which is \eqref{equation:orthogonal} for $\alpha\ne\beta$. 
We show \eqref{equation:orthogonal} for $\alpha=\beta$. 
Recall $Y_i\tilde{\psi}_0=\Lambda_i^\ast\Lambda_i\tilde{\psi}_0=0$. 
By using \eqref{equation:computation4} of Lemma~\ref{theorem:computation}. 
we deduce that 
\begin{align*}
  (\tilde{\psi}_\alpha,\tilde{\psi}_\alpha)_{\mathscr{H}_\Phi(\mathbb{C}^n)}
& =
  \left(
  (\Lambda^\ast)^\alpha\tilde{\psi}_0,
  (\Lambda^\ast)^\alpha\tilde{\psi}_0,
  \right)_{\mathscr{H}_\Phi(\mathbb{C}^n)}
\\
& =
  \left(
  \tilde{\psi}_0,
  \Lambda^\alpha(\Lambda^\ast)^\alpha\tilde{\psi}_0,
  \right)_{\mathscr{H}_\Phi(\mathbb{C}^n)}
\\
& =
  \left(
  \tilde{\psi}_0,
  \left\{
  Y^\alpha
  +
  \sum_{0<\beta<\alpha}
  C_\beta Y^\beta
  +
  (2\rho^2)^{\lvert\alpha\rvert}\alpha!
  \right\}
  \tilde{\psi}_0,
  \right)_{\mathscr{H}_\Phi(\mathbb{C}^n)}
\\
& =
  \left(
  \tilde{\psi}_0,
  (2\rho^2)^{\lvert\alpha\rvert}\alpha!
  \tilde{\psi}_0,
  \right)_{\mathscr{H}_\Phi(\mathbb{C}^n)}
\\
& =
  (2\rho^2)^{\lvert\alpha\rvert}\alpha!
  \lVert\tilde{\psi}_0\rVert_{\mathscr{H}_\Phi(\mathbb{C}^n)}^2. 
\end{align*}
This completes the proof. 
\end{proof}
Here we normalize $\{\tilde{\psi}_\alpha\}$. 
Set 
$$
\psi_\alpha(z)
=
\frac{\tilde{\psi}_\alpha(z)}{\sqrt{(2\rho^2)^{\lvert\alpha\rvert}\alpha!}\lVert\tilde{\psi}_0\rVert_{\mathscr{H}_\Phi(\mathbb{C}^n)}}, 
\quad
\alpha\in\mathbb{N}_0^n.
$$
Then $\{\psi_\alpha\}$ becomes an orthonormal system of 
$\mathscr{H}_{\Phi}(\mathbb{C}^n)$. 
\par
We consider the Rodorigues formula for $\{\psi_\alpha\}$. 
Recall that $\mu_1,\dotsc,\mu_n>0$ since $0<\rho<\lambda_0$. Then we have   
$$
\det(\Phi_{zz}^{\prime\prime}+Q)
=
(\det{U})^2 
(\det{X}) 
(\lambda_1\dotsb\lambda_n)
(\mu_1\dotsb\mu_n)
\ne
0. 
$$
Let $\Xi$ be the principal part of $\Lambda^\ast$, that is, 
$$
\Xi
=
(\Phi_{zz}^{\prime\prime}+Q)
(\Phi_{z\bar{z}}^{\prime\prime})^{-1}
\frac{\partial}{\partial z}, 
\quad
\Xi_i
=
\sum_{j=1}^n
\sum_{k=1}^n
(\overline{\gamma_{ij}}+\overline{q_{ij}})
\beta_{jk}
\frac{\partial}{\partial z_k}, 
\quad
i=1,\dotsc,n. 
$$
Here we state the Rodrigues formula. 
\begin{theorem}
\label{theorem:rodrigues}
If we set 
\begin{equation}
S
=
\Phi_{zz}^{\prime\prime}
-
U
\operatorname{diag}
\left(\frac{\lambda_1^2}{\mu_1},\dotsc,\frac{\lambda_n^2}{\mu_n}\right)
X
\operatorname{diag}(\lambda_1,\dotsc,\lambda_n)
{}^tU, 
\label{equation:matrixS}
\end{equation}
then $S$ is a complex symmetric matrix and for $\alpha\in\mathbb{N}_0^n$
\begin{equation}
\tilde{\psi}_\alpha(z)
=
e^{\langle{z,Sz}\rangle}
\Xi^\alpha
e^{-\langle{z,(S+Q)z}\rangle}
= 
e^{\langle{z,Sz}\rangle}
\Xi^\alpha
\bigl(
e^{-\langle{z,Sz}\rangle}
\tilde{\psi}_0(z)
\bigr),
\label{equation:rodrigues} 
\end{equation}
that is, 
$$
\psi_\alpha(z)
=
\frac{e^{\langle{z,Sz}\rangle}}{\sqrt{(2\rho^2)^{\lvert\alpha\rvert}\alpha!}\lVert\tilde{\psi}_0\rVert_{\mathscr{H}_\Phi(\mathbb{C}^n)}}
\Xi^\alpha
e^{-\langle{z,(S+Q)z}\rangle}
=
\frac{e^{\langle{z,Sz}\rangle}}{\sqrt{(2\rho^2)^{\lvert\alpha\rvert}\alpha!}}
\Xi^\alpha
\bigl(
e^{-\langle{z,Sz}\rangle}
\psi_0(z)
\bigr).
$$
\end{theorem}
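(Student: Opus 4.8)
The plan is to exhibit $\Lambda^\ast$ as the conjugate of its principal part $\Xi$ by the Gaussian factor $e^{\langle{z,Sz}\rangle}$ and then to iterate. Throughout put $B=\Phi^{\prime\prime}_{z\bar z}$ and $M=\Phi^{\prime\prime}_{zz}+Q$; recall that $M$ is complex symmetric with $\det M\ne0$ (noted just before the theorem), so $M^\ast=\overline M$ is invertible, that $\mu_1,\dotsc,\mu_n>0$, and that $Q$ solves \eqref{equation:2202} by Lemma~\ref{theorem:lemma4}, equivalently $B-\rho^2E=M\overline B^{-1}M^\ast$ by \eqref{equation:2201}.

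First I would pin down $S$. Rearranging \eqref{equation:2201} gives $\overline M=\overline B\,M^{-1}(B-\rho^2E)$, hence $\overline M^{-1}=(B-\rho^2E)^{-1}M\overline B^{-1}$, and therefore $B\overline M^{-1}\overline B=B(B-\rho^2E)^{-1}M$. Now use $B=U\operatorname{diag}(\lambda_1^2,\dotsc,\lambda_n^2)U^\ast$, so that $\lambda_i^2-\rho^2=\mu_i^2$ yields $B-\rho^2E=U\operatorname{diag}(\mu_1^2,\dotsc,\mu_n^2)U^\ast$, together with the expression \eqref{equation:matrixQ} for $M$; a short computation with $U^\ast U=E$ collapses $B(B-\rho^2E)^{-1}M$ to $U\operatorname{diag}(\lambda_1^2/\mu_1,\dotsc,\lambda_n^2/\mu_n)X\operatorname{diag}(\lambda_1,\dotsc,\lambda_n)\,{}^tU$, which by \eqref{equation:matrixS} is exactly $\Phi^{\prime\prime}_{zz}-S$. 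Thus $S=\Phi^{\prime\prime}_{zz}-B\overline M^{-1}\overline B$. Complex symmetry of $S$ follows from that of $B\overline M^{-1}\overline B=B(B-\rho^2E)^{-1}M$: transposing and using ${}^tB=\overline B$ and ${}^tM=M$ turns the claim into $B(B-\rho^2E)^{-1}M=M(\overline B-\rho^2E)^{-1}\overline B$, and the latter is immediate from the conjugate of \eqref{equation:2201}, namely $\overline B-\rho^2E=\overline M B^{-1}M$.

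Next I would establish the operator identity $\Lambda_i^\ast=e^{\langle{z,Sz}\rangle}\,\Xi_i\,e^{-\langle{z,Sz}\rangle}$ for each $i$. Since $\Xi_i=\sum_k(\overline M B^{-1})_{ik}\,\partial/\partial z_k$ has constant coefficients and $S$ is symmetric, $e^{\langle{z,Sz}\rangle}(\partial/\partial z_k)e^{-\langle{z,Sz}\rangle}=\partial/\partial z_k-2(Sz)_k$, so $e^{\langle{z,Sz}\rangle}\Xi_ie^{-\langle{z,Sz}\rangle}=\Xi_i-2(\overline M B^{-1}Sz)_i$. On the other hand Lemma~\ref{theorem:lemma2} reads $\Lambda_i^\ast=\Xi_i+2\bigl((\overline B-\overline M B^{-1}\Phi^{\prime\prime}_{zz})z\bigr)_i$, and the two coincide precisely when $\overline M B^{-1}(\Phi^{\prime\prime}_{zz}-S)=\overline B$, i.e. $S=\Phi^{\prime\prime}_{zz}-B\overline M^{-1}\overline B$, which was just proved.

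Finally, since the $\Xi_i$ are constant-coefficient first-order operators the powers conjugate as $(\Lambda_i^\ast)^{\alpha_i}=e^{\langle{z,Sz}\rangle}\Xi_i^{\alpha_i}e^{-\langle{z,Sz}\rangle}$, and telescoping the inner $e^{-\langle{z,Sz}\rangle}e^{\langle{z,Sz}\rangle}$ factors gives $(\Lambda^\ast)^\alpha=(\Lambda_1^\ast)^{\alpha_1}\dotsb(\Lambda_n^\ast)^{\alpha_n}=e^{\langle{z,Sz}\rangle}\Xi^\alpha e^{-\langle{z,Sz}\rangle}$. Applying this to $\tilde{\psi}_0$ and noting $e^{-\langle{z,Sz}\rangle}\tilde{\psi}_0(z)=e^{-\langle{z,(S+Q)z}\rangle}$ yields \eqref{equation:rodrigues}; dividing by $\sqrt{(2\rho^2)^{\lvert\alpha\rvert}\alpha!}\,\lVert\tilde{\psi}_0\rVert_{\mathscr{H}_\Phi(\mathbb{C}^n)}$ and using $\tilde{\psi}_0=\lVert\tilde{\psi}_0\rVert_{\mathscr{H}_\Phi(\mathbb{C}^n)}\psi_0$ gives the stated formula for $\psi_\alpha$. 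The one delicate point is the first step: one should resist substituting \eqref{equation:matrixQ} and \eqref{equation:matrixX} directly into $B\overline M^{-1}\overline B$ (that route forces an awkward reformulation of the constraint \eqref{equation:matrixX} on $X$), and instead eliminate $\overline M^{-1}$ via \eqref{equation:2201} first, after which both the evaluation of $S$ and the proof of its symmetry are routine.
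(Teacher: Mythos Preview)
Your proof is correct and follows the same overall strategy as the paper: establish the conjugation identity $\Lambda_i^\ast=e^{\langle z,Sz\rangle}\,\Xi_i\,e^{-\langle z,Sz\rangle}$, then iterate and apply to $\tilde\psi_0$. The one genuine difference is in how you handle the symmetry of $S$ and the identification of the abstract formula $S=\Phi_{zz}''-B\overline M^{-1}\overline B$ with the explicit expression \eqref{equation:matrixS}. The paper does both of these via the constraint \eqref{equation:matrixX} on $X$: it verifies ${}^tS=S$ by a direct computation in the $U$-diagonalization using \eqref{equation:matrixX}, and then separately reduces $B\overline M^{-1}\overline B$ to the displayed form by another diagonalization argument that again invokes \eqref{equation:matrixX}. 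You instead eliminate $\overline M^{-1}$ via \eqref{equation:2201} to get $B\overline M^{-1}\overline B=B(B-\rho^2E)^{-1}M$, which makes both the identification with \eqref{equation:matrixS} and the symmetry check (via the complex conjugate of \eqref{equation:2201}) immediate without ever appealing to \eqref{equation:matrixX}. This is a modest but real streamlining: your argument stays at the level of the matrix equation \eqref{equation:2201} rather than descending to the parametrization by $X$, and your own closing remark about not substituting \eqref{equation:matrixQ} and \eqref{equation:matrixX} directly is exactly the point where the paper's computation is longer.
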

\begin{proof}
Firstly we check that a matrix $S$ defined by \eqref{equation:matrixS} is a symmetric matrix. By using \eqref{equation:matrixX}, we deduce that 
\begin{align*}
  {}^tS
& =
  {}^t
  \left\{
  \Phi_{zz}^{\prime\prime}
  -
  U
  \operatorname{diag}
  \left(\frac{\lambda_1^2}{\mu_1},\dotsc,\frac{\lambda_n^2}{\mu_n}\right)
  X
  \operatorname{diag}(\lambda_1,\dotsc,\lambda_n)
  {}^tU
  \right\}
\\
& =
  {}^t\Phi_{zz}^{\prime\prime}
  -
  U
  \operatorname{diag}(\lambda_1,\dotsc,\lambda_n)
  {}^tX
  \operatorname{diag}
  \left(\frac{\lambda_1^2}{\mu_1},\dotsc,\frac{\lambda_n^2}{\mu_n}\right)
  {}^tU
\\
& =
  {}^t\Phi_{zz}^{\prime\prime}
  -
  U
  \operatorname{diag}(\lambda_1,\dotsc,\lambda_n)
  \left\{
  {}^tX
  \operatorname{diag}
  \left(\frac{\lambda_1}{\mu_1},\dotsc,\frac{\lambda_n}{\mu_n}\right)
  \right\}
  \operatorname{diag}(\lambda_1,\dotsc,\lambda_n)
  {}^tU
\\
& =
  {}^t\Phi_{zz}^{\prime\prime}
  -
  U
  \operatorname{diag}(\lambda_1,\dotsc,\lambda_n)
  \left\{
  \operatorname{diag}
  \left(\frac{\lambda_1}{\mu_1},\dotsc,\frac{\lambda_n}{\mu_n}\right)
  X
  \right\}
  \operatorname{diag}(\lambda_1,\dotsc,\lambda_n)
  {}^tU
\\
& =
  {}^t\Phi_{zz}^{\prime\prime}
  -
  U
  \operatorname{diag}
  \left(\frac{\lambda_1^2}{\mu_1},\dotsc,\frac{\lambda_n^2}{\mu_n}\right)
  X
  \operatorname{diag}(\lambda_1,\dotsc,\lambda_n)
  {}^tU
\\
& =
  S.
\end{align*}
Next we find that a complex symmetric matrix $S=[s_{ij}]$ satisfying desired equation 
$$
\Lambda_i^\ast F(z)
=
e^{\langle{z,Sz}\rangle}
\Xi_i
\bigl(
e^{-\langle{z,Sz}\rangle}
F(z)
\bigr), 
\quad
F \in \operatorname{Hol}(\mathbb{C}^n)
$$
must be of the form given in \eqref{equation:matrixS}. 
Since 
$$
e^{\langle{z,Sz}\rangle}
\Xi_i
\bigl(
e^{-\langle{z,Sz}\rangle}
F(z)
\bigr)
=
\Xi_iF(z)
-
\bigl\{
\Xi_i
(\langle{z,Sz}\rangle)
\bigr\}
F(z), 
$$
\begin{align*}
  \Xi_i(\langle{z,Sz}\rangle)
& =
  \sum_{j=1}^n
  \sum_{k=1}^n
  (\overline{\gamma_{ij}}+\overline{q_{ij}})
  \beta_{jk}
  \frac{\partial}{\partial z_k}
  \left\{
  \sum_{l=1}^n\sum_{m=1}^n
  s_{lm}z_lz_m
  \right\}
\\
& =
  2
  \sum_{j=1}^n
  \sum_{k=1}^n
  \sum_{l=1}^m
  (\overline{\gamma_{ij}}+\overline{q_{ij}})
  \beta_{jk}
  s_{kl}
  z_l
\end{align*}
we shall find $S$ satisfying 
$$
-
\sum_{j=1}^n
\sum_{k=1}^n
(\overline{\gamma_{ij}}+\overline{q_{ij}})
\beta_{jk}
s_{kl}
=
\overline{\alpha_{il}}
-
\sum_{j=1}^n
\sum_{k=1}^n
\bigl(\overline{\gamma_{ij}}+\overline{q_{ij}}\bigr)
\beta_{jk}
\gamma_{kl}, 
\quad
i,l=1,\dotsc,n, 
$$
that is, 
$$
(\Phi_{zz}^{\prime\prime}+Q)^\ast
(\Phi_{z\bar{z}}^{\prime\prime})^{-1}
S
=
(\Phi_{zz}^{\prime\prime}+Q)^\ast
(\Phi_{z\bar{z}}^{\prime\prime})^{-1}
\Phi_{zz}^{\prime\prime}
-
\overline{\Phi_{z\bar{z}}^{\prime\prime}}.
$$
This implies that 
$$
S
=
\Phi_{zz}^{\prime\prime}
-
\Phi_{z\bar{z}}^{\prime\prime}
\{(\Phi_{zz}^{\prime\prime}+Q)^\ast\}^{-1}
\overline{\Phi_{z\bar{z}}^{\prime\prime}}.
$$
We will find more concrete form of $S$. 
Recall \eqref{equation:matrixQ} and 
\begin{align*}
  \Phi_{z\bar{z}}^{\prime\prime}
& =
  U
  \operatorname{diag}(\lambda_1^2,\dotsc,\lambda_n^2)
  U^\ast,
\\
  \overline{\Phi_{z\bar{z}}^{\prime\prime}}
& =
  \overline{U}
  \operatorname{diag}(\lambda_1^2,\dotsc,\lambda_n^2)
  {}^tU,
\\
  (\Phi_{zz}^{\prime\prime}+Q)^\ast
& =
  \overline{\Phi_{zz}^{\prime\prime}+Q}
\\
& =
  \overline{U}
  \operatorname{diag}(\mu_1,\dotsc,\mu_n)
  \overline{X}
  \operatorname{diag}(\lambda_1,\dotsc,\lambda_n)
  U^\ast,
\\
  \{(\Phi_{zz}^{\prime\prime}+Q)^\ast\}^{-1}
& =
  U
  \operatorname{diag}\left(\frac{1}{\lambda_1},\dotsc,\frac{1}{\lambda_n}\right)
  {}^tX
  \operatorname{diag}\left(\frac{1}{\mu_1},\dotsc,\frac{1}{\mu_n}\right)
  {}^tU.
\end{align*}
By using these identities, we deduce that 
\begin{align*}
  S
  -
  \Phi_{zz}^{\prime\prime}
& =
  -
  U
  \operatorname{diag}(\lambda_1^2,\dotsc,\lambda_n^2)
  U^\ast
\\
& \times
  U
  \operatorname{diag}\left(\frac{1}{\lambda_1},\dotsc,\frac{1}{\lambda_n}\right)
  {}^tX
  \operatorname{diag}\left(\frac{1}{\mu_1},\dotsc,\frac{1}{\mu_n}\right)
  {}^tU
\\
& \times
  \overline{U}
  \operatorname{diag}(\lambda_1^2,\dotsc,\lambda_n^2)
  {}^tU
  \bigr\}
\\
& =
  -
  U
  \operatorname{diag}(\lambda_1,\dotsc,\lambda_n)
  {}^tX
  \operatorname{diag}\left(\frac{\lambda_1^2}{\mu_1},\dotsc,\frac{\lambda_n^2}{\mu_n}\right)
  {}^tU
\\
& =
  -
  U
  \operatorname{diag}(\lambda_1,\dotsc,\lambda_n)
  \left\{
  {}^tX
  \operatorname{diag}\left(\frac{\lambda_1}{\mu_1},\dotsc,\frac{\lambda_n}{\mu_n}\right)
  \right\}
  \operatorname{diag}(\lambda_1,\dotsc,\lambda_n)
  {}^tU
\\
& =
  -
  U
  \operatorname{diag}(\lambda_1,\dotsc,\lambda_n)
  \left\{
  \operatorname{diag}\left(\frac{\lambda_1}{\mu_1},\dotsc,\frac{\lambda_n}{\mu_n}\right)
  X
  \right\}
  \operatorname{diag}(\lambda_1,\dotsc,\lambda_n)
  {}^tU
\\
& =
  -
  U
  \operatorname{diag}\left(\frac{\lambda_1^2}{\mu_1},\dotsc,\frac{\lambda_n^2}{\mu_n}\right)
  X
  \operatorname{diag}(\lambda_1,\dotsc,\lambda_n)
  {}^tU,  
\end{align*}
which is \eqref{equation:matrixS}. 
This completes the proof.
\end{proof}
We remark that 
\begin{align*}
  S+Q
& =
  (S-\Phi_{zz}^{\prime\prime})+(Q+\Phi_{zz}^{\prime\prime})
\\
& =
  U
  \operatorname{diag}\left(\mu_1-\frac{\lambda_1^2}{\mu_1},\dotsc,\mu_n-\frac{\lambda_n^2}{\mu_n}\right)
  X
  \operatorname{diag}(\lambda_1,\dotsc,\lambda_n)
  {}^tU
\\
& =
  U
  \operatorname{diag}\left(-\frac{\lambda_1^2-\mu_1^2}{\mu_1},\dotsc,-\frac{\lambda_n^2-\mu_n^2}{\mu_n}\right)
  X
  \operatorname{diag}(\lambda_1,\dotsc,\lambda_n)
  {}^tU
\\
& =
  U
  \operatorname{diag}\left(-\frac{\rho^2}{\mu_1},\dotsc,-\frac{\rho^2}{\mu_n}\right)
  X
  \operatorname{diag}(\lambda_1,\dotsc,\lambda_n)
  {}^tU
\\
& =
  -
  \rho^2
  U
  \operatorname{diag}\left(\frac{1}{\mu_1},\dotsc,\frac{1}{\mu_n}\right)
  X
  \operatorname{diag}(\lambda_1,\dotsc,\lambda_n)
  {}^tU.
\\
  \Xi
& =
  \overline{U}
  \operatorname{diag}(\mu_1,\dotsc,\mu_n)
  \overline{X}
  \operatorname{diag}\left(\frac{1}{\lambda_1},\dotsc,\frac{1}{\lambda_n}\right)
  U^\ast
  \frac{\partial}{\partial z}.
\end{align*}
\par
Finally we check that the orthonormal system $\{\psi_\alpha\}$ 
is complete in $\mathscr{H}_\Phi(\mathbb{C}^n)$. 
\begin{theorem}
\label{theorem:completeness}
Suppose that $0<\rho<\lambda_0$. 
Then $\{\psi_\alpha\}$ is a complete orthonormal system of 
$\mathscr{H}_\Phi(\mathbb{C}^n)$. 
\end{theorem}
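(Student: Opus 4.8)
The plan is to reduce completeness to a totality statement on $L^2(\mathbb{R}^n)$ that can be settled by the uniqueness theorem for the Fourier transform. First, since $\tilde\psi_\alpha=(\Lambda^\ast)^\alpha\tilde\psi_0$ and, by the Rodrigues formula \eqref{equation:rodrigues}, $\tilde\psi_\alpha(z)=e^{\langle z,Sz\rangle}\Xi^\alpha\bigl(e^{-\langle z,(S+Q)z\rangle}\bigr)$ with $\Xi=(\Phi^{\prime\prime}_{zz}+Q)(\Phi^{\prime\prime}_{z\bar z})^{-1}\partial/\partial z$ a nonsingular constant matrix applied to the gradient and $S+Q$ nonsingular (recall $\det(\Phi^{\prime\prime}_{zz}+Q)\ne0$ and the explicit form of $S+Q$), applying $\Xi^\alpha$ to the Gaussian $e^{-\langle z,(S+Q)z\rangle}$ produces a polynomial of degree $|\alpha|$ whose top-degree part spans, as $\alpha$ varies, all homogeneous polynomials; hence $\operatorname{span}\{\tilde\psi_\alpha\}_{\alpha}=\{p(z)\tilde\psi_0(z):p\in\mathbb{C}[z_1,\dots,z_n]\}$. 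By Lemma~\ref{theorem:lemma5}, $\lVert z^\alpha\tilde\psi_0\rVert^2_{\mathscr{H}_\Phi(\mathbb{C}^n)}\le\int_{\mathbb{C}^n}|z^\alpha|^2e^{-\rho^2|z|^2}L(dz)=\pi^n\alpha!\rho^{-2|\alpha|-2n}$, so for each $w\in\mathbb{C}^n$ the series $\sum_\alpha w^\alpha z^\alpha\tilde\psi_0/\alpha!$ converges in $\mathscr{H}_\Phi(\mathbb{C}^n)$ to $e^{\langle w,z\rangle}\tilde\psi_0(z)$ (which lies in $\mathscr{H}_\Phi(\mathbb{C}^n)$, again by Lemma~\ref{theorem:lemma5}); therefore $\{\psi_\alpha\}$ is complete if and only if $\{e^{\langle w,\cdot\rangle}\tilde\psi_0:w\in\mathbb{C}^n\}$ is total in $\mathscr{H}_\Phi(\mathbb{C}^n)$, equivalently (since $T^\ast$ is a Hilbert space isomorphism onto $L^2(\mathbb{R}^n)$) if and only if $\{T^\ast(e^{\langle w,\cdot\rangle}\tilde\psi_0):w\in\mathbb{C}^n\}$ is total in $L^2(\mathbb{R}^n)$.

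Next I would compute $T^\ast(e^{\langle w,\cdot\rangle}\tilde\psi_0)$ explicitly. By definition this is $C_\phi\int_{\mathbb{C}^n}e^{-\sqrt{-1}\,\overline{\phi(z,x)}}e^{\langle w,z\rangle-\langle z,Qz\rangle}e^{-2\Phi(z)}L(dz)$, a Gaussian integral over $\mathbb{C}^n\simeq\mathbb{R}^{2n}$ in which the real part of the total quadratic exponent is negative definite (this is where Lemma~\ref{theorem:lemma5} and the bound $\operatorname{Re}\{\sqrt{-1}\phi(z,x)\}\le-\varepsilon|x|^2$ enter). Completing the square — that is, shifting the contour of integration off $\mathbb{R}^{2n}$ exactly as in the derivation of $T\circ T^\ast$ in Section~\ref{section:introduction} — one obtains
$$
T^\ast(e^{\langle w,\cdot\rangle}\tilde\psi_0)(x)
=
c(w)\,\exp\Bigl(\tfrac12\langle x,\mathsf{P}x\rangle+\langle \mathsf{L}w,x\rangle+\tfrac14\langle w,\mathsf{N}w\rangle\Bigr),
\quad
x\in\mathbb{R}^n,
$$
with $\mathsf{P},\mathsf{N}$ fixed complex symmetric matrices, $\mathsf{L}$ a fixed complex matrix, and $c(w)\ne0$; since the left-hand side belongs to $L^2(\mathbb{R}^n)$ for every $w$ we must have $\operatorname{Re}\mathsf{P}<0$, and $\mathsf{L}$ must be invertible, for otherwise some $w_0\ne0$ would satisfy $\mathsf{L}w_0=0$, and then for $t_1\ne t_2$ the functions $T^\ast(e^{t_1\langle w_0,\cdot\rangle}\tilde\psi_0)$ and $T^\ast(e^{t_2\langle w_0,\cdot\rangle}\tilde\psi_0)$ would be scalar multiples of one another, contradicting the injectivity of $T^\ast$ together with the fact that $e^{t_1\langle w_0,\cdot\rangle}\tilde\psi_0$ and $e^{t_2\langle w_0,\cdot\rangle}\tilde\psi_0$ have distinct Taylor expansions.

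With this in hand, completeness follows from Fourier uniqueness. Since $\mathsf{L}$ is invertible, $\{T^\ast(e^{\langle w,\cdot\rangle}\tilde\psi_0):w\in\mathbb{C}^n\}$ contains, up to nonzero scalar factors, every function $x\mapsto\exp\bigl(\tfrac12\langle x,\mathsf{P}x\rangle+\langle v,x\rangle\bigr)$, $v\in\mathbb{C}^n$. If $u\in L^2(\mathbb{R}^n)$ is orthogonal to all of these, then $\int_{\mathbb{R}^n}u_1(x)e^{\langle \bar v,x\rangle}\,dx=0$ for all $v\in\mathbb{C}^n$, where $u_1(x):=u(x)\exp\bigl(\tfrac12\langle x,\overline{\mathsf{P}}x\rangle\bigr)$; because $\operatorname{Re}\mathsf{P}<0$ the factor $\exp(\tfrac12\langle x,\overline{\mathsf{P}}x\rangle)$ is square-integrable, so $u_1\in L^1(\mathbb{R}^n)$, and taking $v=\sqrt{-1}\,\xi$ with $\xi\in\mathbb{R}^n$ shows that the Fourier transform of $u_1$ vanishes identically on $\mathbb{R}^n$; hence $u_1=0$ a.e., hence $u=0$. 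Thus $\{T^\ast(e^{\langle w,\cdot\rangle}\tilde\psi_0)\}_w$ is total in $L^2(\mathbb{R}^n)$, and unwinding the reductions of the first paragraph shows that $\{\psi_\alpha\}$ is a complete orthonormal system of $\mathscr{H}_\Phi(\mathbb{C}^n)$.

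The main obstacle is the Gaussian integral of the second paragraph: carrying out the contour shift rigorously and, in particular, verifying the two qualitative facts $\operatorname{Re}\mathsf{P}<0$ and $\det\mathsf{L}\ne0$ on which the argument rests; once the integral is evaluated — a routine but lengthy finite-dimensional computation of exactly the type already used for $T\circ T^\ast$ — the remainder of the proof is soft. An alternative to the second and third paragraphs is to transport $\Lambda_1,\dots,\Lambda_n$ to $L^2(\mathbb{R}^n)$ via $T$: they become first-order differential operators with affine-linear coefficients satisfying the same commutation relations, so $\sum_i(T^\ast\Lambda_iT)^\ast(T^\ast\Lambda_iT)=T^\ast(H_\rho-\rho^2)T$ is an elliptic operator with a coercive quadratic potential, hence has compact resolvent and a complete system of eigenfunctions; the ladder relations $[H_\rho,\Lambda_i]=-2\rho^2\Lambda_i$ and $\Lambda_i\tilde\psi_0=0$ then force every eigenfunction to lie in $\overline{\operatorname{span}}\{\tilde\psi_\alpha\}$, which therefore exhausts $\mathscr{H}_\Phi(\mathbb{C}^n)$.
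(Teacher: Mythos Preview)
Your argument is correct and takes a genuinely different route from the paper's. Both proofs open the same way: the Rodrigues formula \eqref{equation:rodrigues} shows that $\operatorname{span}\{\tilde\psi_\alpha\}=\{p(z)\tilde\psi_0(z):p\in\mathbb{C}[z]\}$, so completeness reduces to the density of polynomials times $\tilde\psi_0$ in $\mathscr{H}_\Phi(\mathbb{C}^n)$. At this point the paper simply asserts ``Recall that $\{({}^tUz)^\alpha\tilde\psi_0(z)\}$ is a complete orthogonal system of $\mathscr{H}_\Phi(\mathbb{C}^n)$'' and closes; no proof or reference for that fact is given anywhere in the paper.

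You, by contrast, supply a self-contained proof of the density: pass from polynomials to exponentials $e^{\langle w,\cdot\rangle}\tilde\psi_0$ using the norm estimate of Lemma~\ref{theorem:lemma5}, transport to $L^2(\mathbb{R}^n)$ via the unitary $T^\ast$, evaluate $T^\ast(e^{\langle w,\cdot\rangle}\tilde\psi_0)$ as an explicit Gaussian, and finish with Fourier uniqueness. The indirect verifications of $\operatorname{Re}\mathsf P<0$ (from $L^2$-membership) and $\det\mathsf L\ne0$ (from injectivity of $T^\ast$ on distinct entire functions) are legitimate and neatly sidestep having to compute $\mathsf P,\mathsf L$ explicitly; the only real labor, as you correctly isolate, is the $2n$-dimensional Gaussian integral. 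Your spectral alternative is also sound and is arguably closer in spirit to what the paper tacitly relies on, since the ``recalled'' fact is most naturally established by transporting to $L^2(\mathbb{R}^n)$ and invoking Hermite-type completeness there. In short: the paper's proof is short because it quotes the hard step; yours is longer because it actually proves it.
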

\begin{proof}
It suffices to see the completeness. 
Let $\zeta={}^tUz$. 
Then $z=\overline{U}\zeta$, 
$\partial/\partial z=U\partial/\partial \zeta$, 
$\lvert{z}\rvert=\lvert\zeta\rvert$ and 
$\lvert\psi_0(z)\rvert^2e^{-2\Phi(z)} \leqslant e^{-2\rho^2\lvert\zeta\rvert^2}$. 
The Rodrigues formula \eqref{equation:rodrigues} shows that 
\begin{align*}
  \tilde{\psi}_\alpha(z)
& =
  \tilde{\psi_0}(z)
  e^{\langle{z,(Q+S)z}\rangle}
  \Xi^\alpha
  \Bigl(
  e^{\langle{z,(Q+S)z}\rangle}
  \Bigr),
\\
  \tilde{\psi}_\alpha(\overline{U}\zeta)
& =
  \tilde{\psi_0}(\overline{U}\zeta)
  \exp
  \left\{
  -
  \rho^2
  \left\langle
  \zeta,
  \operatorname{diag}\left(\frac{1}{\mu_1},\dotsc,\frac{1}{\mu_n}\right)
  X
  \operatorname{diag}(\lambda_1,\dotsc,\lambda_n)
  \zeta
  \right\rangle
  \right\}
\\
& \times
  \left(
  \overline{U}
  \operatorname{diag}(\mu_1,\dotsc,\mu_n)
  \overline{X}
  \operatorname{diag}\left(\frac{1}{\lambda_1},\dotsc,\frac{1}{\lambda_n}\right)
  \frac{\partial}{\partial \zeta}
  \right)^\alpha
\\
& \times
  \exp
  \left\{
  \rho^2
  \left\langle
  \zeta,
  \operatorname{diag}\left(\frac{1}{\mu_1},\dotsc,\frac{1}{\mu_n}\right)
  X
  \operatorname{diag}(\lambda_1,\dotsc,\lambda_n)
  \zeta
  \right\rangle
  \right\}. 
\end{align*}
Hence $ \tilde{\psi}_\alpha(\overline{U}\zeta)$ 
is a product of $\tilde{\psi}_0(\overline{U}\zeta)$ 
and a kind of a Hermite polynomial of $\zeta$ of degree $\alpha$. 
Recall that $\{({}^tUz)^\alpha\tilde{\psi}_0(z)\}$ 
is a complete orthogonal system of 
$\mathscr{H}_\Phi(\mathbb{C}^n)$. 
Thus $\{\psi_\alpha\}$ is also a complete orthogonal system of 
$\mathscr{H}_\Phi(\mathbb{C}^n)$ 
since all the $({}^tUz)^\alpha$ is a finite linear combination 
of the Hermite polynomials of ${}^tUz$. 
\end{proof}
%
%
\section{Examples}
\label{section:examples}
Finally we shall see the known examples 
from a point of view of the results developed in the present paper. 
Let $s$ be a parameter satisfying $0<s<1$. 
%
%
\subsection{One-dimensional example of van Eijndhoven and Meyers in \cite{EM}}
We shall understand the example in \cite{EM} in terms of our theory. 
To have the function space $\chi_s(\mathbb{C})$, we need to impose 
$$
\frac{\lvert{B}\rvert^2}{4C_I}
=
\Phi^{\prime\prime}_{z\bar{z}}
=
\frac{1-s^2}{4s}, 
\quad
-
\frac{B^2}{4C_I}
-
\frac{A}{2\sqrt{-1}}
=
\Phi^{\prime\prime}_{zz}
=
-
\frac{1+s^2}{4s}
$$
on complex numbers $A$, $B$ and $C$. 
In this case $\lambda_1^2=\lambda_0^2=(1-s^2)/4s$. 
If we choose 
$$
A=\frac{\sqrt{-1}}{s}, 
\quad
B=\pm\sqrt{-1}\sqrt{1-s^2}, 
\quad
C=t+\sqrt{-1}s
\quad
(t\in\mathbb{R}), 
$$
then $\chi_s(\mathbb{C})$ is realized 
as the image of the Bargmann-type transform of $L^2(\mathbb{R})$. 
See, e.g., \cite{chihara3}. 
Their holomorphic Hermite functions are interpreted as 
$$
\tilde{\psi}_k(z)
=
e^{z^2/2}
\left(
-
\frac{1-s}{1+s}
\frac{d}{dz}
\right)^k
e^{-z^2},
\quad
k=0,1,2\dotsc. 
$$
In other words, this case corresponds to 
$$
Q=S=\frac{1}{2},
\quad
\Lambda
=
\frac{d}{dz}+z, 
\quad
\Lambda^\ast
=
\frac{1-s}{1+s}
\left(
-
\frac{d}{dz}+2z
\right).
$$
More precisely, $Q$ and $S$ are firstly determined by the exponential functions 
$e^{z^2/2}$ and $e^{-z^2}$. Next the anihilation operator $\Lambda$ is determined 
since $\Lambda$ takes the form $d/dz+\dotsb$ in our settings. 
Finally the creation operator $\Lambda^\ast$ 
is determined as the adjoint of $\Lambda$ in $\chi_s(\mathbb{C})$. 
If we set 
$X=1$, 
$U=\pm\sqrt{-1}$ 
and 
$\rho^2=(1-s)/(1+s)$, 
we can understand that 
the pair of $\chi_s(\mathbb{C})$ and $\{\tilde{\psi}_k\}_{k=0}^\infty$ 
is one of the examples of the results of the present paper. 
Indeed, we can check 
$$
\mu_1^2
:=
\lambda_1^2-\rho^2
=
\frac{(1-s)^3}{4s(1+s)}>0, 
$$
$$
Q+\Phi^{\prime\prime}_{zz}
=
Q
-
\frac{B^2}{4C_I}
-
\frac{A}{2i}
=
-
\frac{(1-s)^2}{4s}
=
U\mu_1X\lambda_1{}^tU, 
$$
$$
S
=
\frac{1}{2}
=
\Phi^{\prime\prime}_{zz}
-
U\frac{\lambda_1^2}{\mu_1}X\lambda_1{}^tU. 
$$
%
%
\subsection{Two-dimensional example of G\'orska, Horzela and Szafraniec in \cite{GHS}}
Finally, we shall understand the example in \cite{GHS} in terms of our theory. 
Let $Z=(z,\zeta)\in\mathbb{C}^2$. 
To have the function space $\chi_s(\mathbb{C}^2)$, we need to impose 
\begin{alignat*}{2}
  \Phi^{\prime\prime}_{z\bar{z}}
& =
  \frac{BC_I^{-1}B^\ast}{4}
  =
  \frac{1-s^2}{8s}
  E, 
& \quad
  E
& =
  \begin{bmatrix}
  1 & 0 
  \\
  0 & 1
  \end{bmatrix},
\\
  \Phi^{\prime\prime}_{zz}
& =
  -
  \frac{BC_I^{-1}B}{4}
  -
  \frac{A}{2\sqrt{-1}}
  =
  -
  \frac{1+s^2}{8s}
  K, 
& \quad
  K
& =
  \begin{bmatrix}
  0 & 1 
  \\
  1 & 0
  \end{bmatrix}.
\end{alignat*}
on $2\times2$ matrices $A$, $B$ and $C$. 
In this case $\lambda_0^2=\lambda_1^2=\lambda_2^2=(1-s^2)/8s$. 
This is realized by 
$$
A
=
\frac{\sqrt{-1}}{4s}
\bigl\{
(1-s^2)E
+
(1+s^2)K
\bigr\},
\quad
B
=
\pm\sqrt{-1}\sqrt{1-s^2}E, 
\quad
C
=
2\sqrt{-1}sE
$$
for instance. 
Their holomorphic Hermite functions are interpreted as 
$$
\tilde{\psi}_{k,l}(z,\zeta)
=
e^{z\zeta/2}
\left(
-
\frac{1-s}{1+s}
\frac{\partial}{\partial \zeta}
\right)^k
\left(
-
\frac{1-s}{1+s}
\frac{\partial}{\partial z}
\right)^l
e^{-z\zeta},
\quad
k,l=0,1,2,\dotsc.
$$
More precisely, this case corresponds to 
$$
Q=S=\frac{K}{4},
\quad
\Lambda
=
E\frac{\partial}{\partial Z}
+
\frac{K}{2}Z,
\quad
\Lambda^\ast
=
\frac{1-s}{1+s}
\left(
K
\frac{\partial}{\partial Z}
+
\frac{E}{2}
Z
\right).
$$
If we set $X=K$, $U=\pm\sqrt{-1}E$ and $\rho^2=(1-s)/2(1+s)$, 
we can understand that 
the pair of $\chi_s(\mathbb{C}^2)$ and $\{\tilde{\psi}_{k,l}\}_{k,l=0}^\infty$ 
is also one of the examples of the results of the present paper. 
Indeed, we can check 
$$
\mu_i^2
:=
\lambda_i^2
-
\rho^2
=
\frac{(1-s)^3}{8s(1+s)}
>
0, 
\quad
i=1,2,
$$
$$
{}^tX
\operatorname{diag}
\left(
\frac{\mu_1}{\lambda_1},
\frac{\mu_2}{\lambda_2}
\right)
=
\frac{1-s}{1+s}K
=
\operatorname{diag}
\left(
\frac{\mu_1}{\lambda_1},
\frac{\mu_2}{\lambda_2}
\right)
X,
$$
$$
Q+\Phi^{\prime\prime}_{zz}
=
Q
-
\frac{B^2}{4C_I}
-
\frac{A}{2i}
=
-
\frac{(1-s)^2}{8s}
K
=
U
\operatorname{diag}(\mu_1,\mu_2)
X
\operatorname{diag}(\lambda_1,\lambda_2)
{}^tU,
$$
$$
S
=
\frac{K}{4}
=
\Phi^{\prime\prime}_{zz}
-
U
\operatorname{diag}
\left(
\frac{\lambda_1^2}{\mu_1},
\frac{\lambda_2^2}{\mu_2}
\right)
X
\operatorname{diag}(\lambda_1,\lambda_2)
{}^tU.
$$
%
%

\end{document}